\documentclass[11pt]{article}
\usepackage[a4paper,margin=1in]{geometry}
\usepackage{amsmath,amssymb,amsthm}
\usepackage{natbib}
\usepackage{tikz}
\usepackage{graphicx}

\newtheorem{theorem}{Theorem}
\newtheorem{example}[theorem]{Example}
\newtheorem{proposition}[theorem]{Proposition}
\newtheorem{corollary}[theorem]{Corollary}
\newtheorem{definition}[theorem]{Definition}
\newtheorem{remark}{Remark}

\title{Pulse Graphs: Prime-Activated Boolean Dynamics on Directed Graphs}
\author{
Pakin Methawisal\\
Mahidol University International College, Mahidol University\\
\texttt{pakin.met@student.mahidol.edu}
}
\date{}

\begin{document}
\maketitle

\begin{abstract}
We study synchronous Boolean dynamics on finite loopless directed graphs in which a vertex is active at the next time step exactly when its number of active in-neighbors is prime. We call these systems \emph{Pulse Graphs}. Let \(L(n)\) denote the largest attractor period realizable on \(n\) vertices. Exhaustive enumeration gives
\[
L(1),\ldots,L(5)=1,1,1,3,9.
\]
Our main result determines the exponential order of the maximum period:
\[
2^{n-3}-1\leq L(n)\leq2^n-n-1
\qquad(n\geq5).
\]

The lower bound is obtained by implementing a maximal-length affine feedback register using prime-count logic gates. For \(n\geq6\), the construction is loopless, has maximum in-degree five, and uses only \(O(n)\) edges. For odd \(n\geq7\), a period-3 control module improves the lower bound to
\[
3(2^{n-4}-1).
\]

For complete directed graphs, we derive an exact update formula, classify all attractors as fixed points or complement two-cycles, prove that every orbit reaches its eventual attractor within three updates, and count the attractors explicitly. We also derive the activation probability under independent random inputs. For sparse random directed graphs, the associated prime-Poisson mean-field map undergoes a nondegenerate fold at
\[
c_\ast\approx3.824963,
\qquad
\rho_\ast\approx0.368241,
\]
with local bistability immediately above the threshold.
\end{abstract}

\section{Introduction}

Boolean networks are finite-state dynamical systems in which interacting components take values in \(\{0,1\}\) and update according to prescribed local rules. They have been studied as abstract models of regulatory, computational, and network dynamics since the early work of Kauffman \citep{kauffman1969metabolic}. Their behavior depends on the directed graph, the local update functions, and the update schedule.

In this paper, we choose the prime numbers as the activating input counts. This gives a simple arithmetic rule that is nonmonotone: for example, a vertex receiving three active inputs becomes active, whereas one receiving four active inputs becomes inactive. More generally, increasing the number of active inputs can switch the output between active and inactive. Prime activation therefore provides a natural setting for studying how an elementary arithmetic condition can generate nontrivial global dynamics on a directed graph.

The rule is totalistic because the next state of a vertex depends only on the number of its active in-neighbors, rather than on their identities. Totalistic rules on general graphs have been studied as extensions of cellular-automaton dynamics \citep{marr2009outer,goles2021totalistic}.

Pulse Graphs differ from traditional monotone threshold systems. For symmetric threshold networks updated synchronously, periodic attractors have period at most two \citep{goles1980periodic}. Kuhlman et al.\ obtained a similar restriction for synchronous bi-threshold systems, while showing that asynchronous bi-threshold systems may have longer periodic orbits \citep{kuhlman2011bifurcations}. The prime-activation rule is nonmonotone and does not satisfy the assumptions of those results. This leads to the extremal question that motivates the present paper: how large can an attractor period be on \(n\) vertices?

This extremal question is related to work on finite dynamical systems with prescribed interaction graphs, including rank and periodic rank \citep{gadouleau2016simple,gadouleau2018rank,gadouleau2020influence}. Interaction graphs of automata networks representing the same dynamics up to isomorphism are studied by \citet{bridoux2023interaction}, whereas the bounded-degree realization problem is studied by \citet{aracena2026bounded}. The register construction below is also related to maximal-length feedback registers and, more broadly, to structured long traversals of finite state spaces, including Gray-code constructions \citep{zierler1959linear,fredricksen1982full,mutze2023gray}. A key distinguishing restriction in the present setting is that every vertex uses the same prime-count local rule.

The central question of this paper is how much dynamical complexity can be generated by this single arithmetic update rule, and how strongly that complexity depends on the underlying graph. We approach this question from two complementary directions. On general directed graphs, prime activation can implement Boolean operations and feedback mechanisms that produce very long periodic orbits. On highly symmetric graphs like complete graphs, however, the same rule can collapse to much simpler dynamics. This contrast shows that the behavior of Pulse Graphs is governed not only by the nonmonotonicity of prime activation, but also by the underlying substrate structure through which that rule is coupled.

\begin{definition}
A \emph{Pulse Graph} is a finite directed graph \(G=(V,E)\) with no self-loops, together with Boolean states
\[
x_t:V\to\{0,1\}, \qquad t=0,1,2,\dots.
\]
For \(v\in V\), let
\[
N^-(v)=\{u\in V:(u,v)\in E\},
\qquad
k_t(v)=\sum_{u\in N^-(v)}x_t(u).
\]
Thus \(k_t(v)\) is the number of active in-neighbors of \(v\) at time \(t\).

\medskip

The update rule is
\[
x_{t+1}(v)=
\begin{cases}
1, & \text{if } k_t(v) \text{ is prime},\\
0, & \text{otherwise}.
\end{cases}
\]

A state \(x\) is a \emph{fixed point} if applying the Pulse update leaves every vertex unchanged. An \emph{attractor of period \(p\)} is a periodic sequence of \(p\) distinct states. Thus a fixed point is an attractor of period \(1\), while an attractor of period at least \(2\) is called a \emph{nontrivial attractor}.

\end{definition}

\begin{figure}[ht]
\centering
\begin{tikzpicture}[
    >=stealth,
    active/.style={
        circle,
        draw,
        fill=black,
        text=white,
        minimum size=8mm
    },
    inactive/.style={
        circle,
        draw,
        fill=white,
        text=black,
        minimum size=8mm
    },
    counted/.style={->,thick},
    uncounted/.style={->,thick,dashed,draw=gray}
]

\begin{scope}
    \node[active]   (a0) at (0,1.5) {$v_0$};
    \node[inactive] (a1) at (2,1.5) {$v_1$};
    \node[inactive] (a3) at (0,0) {$v_3$};
    \node[active]   (a2) at (2,0) {$v_2$};

    \draw[counted]   (a0) -- (a3);
    \draw[counted]   (a0) -- (a2);
    \draw[uncounted] (a1) -- (a2);
    \draw[counted]   (a2) -- (a3);

    \node[draw=none] at (1,-0.7) {$x_t=1010$};
    \node[draw=none,align=center] at (1,2.35)
    {\small \(v_3\) receives \(2\) active inputs};
\end{scope}

\draw[->,very thick] (2.7,0.75) -- (4.3,0.75)
node[midway,above,draw=none] {\small update};

\begin{scope}[xshift=5cm]
    \node[inactive] (b0) at (0,1.5) {$v_0$};
    \node[inactive] (b1) at (2,1.5) {$v_1$};
    \node[active]   (b3) at (0,0) {$v_3$};
    \node[inactive] (b2) at (2,0) {$v_2$};

    \draw[uncounted] (b0) -- (b3);
    \draw[uncounted] (b0) -- (b2);
    \draw[uncounted] (b1) -- (b2);
    \draw[uncounted] (b2) -- (b3);

    \node[draw=none] at (1,-0.7) {$x_{t+1}=0001$};
    \node[draw=none,align=center] at (1,2.35)
    {\small \(v_3\) has no outgoing edges};
\end{scope}

\draw[->,very thick] (7.7,0.75) -- (9.3,0.75)
node[midway,above,draw=none] {\small update};

\begin{scope}[xshift=10cm]
    \node[inactive] (c0) at (0,1.5) {$v_0$};
    \node[inactive] (c1) at (2,1.5) {$v_1$};
    \node[inactive] (c3) at (0,0) {$v_3$};
    \node[inactive] (c2) at (2,0) {$v_2$};

    \draw[uncounted] (c0) -- (c3);
    \draw[uncounted] (c0) -- (c2);
    \draw[uncounted] (c1) -- (c2);
    \draw[uncounted] (c2) -- (c3);

    \node[draw=none] at (1,-0.7) {$x_{t+2}=0000$};
    \node[draw=none,align=center] at (1,2.35)
    {\small fixed point};
\end{scope}

\end{tikzpicture}
\caption{Two consecutive Pulse updates on a \(4\)-vertex graph. Solid arrows originate at active vertices. Starting from \(x_t=1010\), only \(v_3\) becomes active. Since \(v_3\) has no outgoing edges, the next state is \(0000\), which is a fixed point of period \(1\).}
\label{fig:basic-pulse-update}
\end{figure}

\begin{proposition}[Basic fixed-point facts]
Let \(G=(V,E)\) be a Pulse Graph.
\begin{enumerate}
    \item If \(N^-(v)=\varnothing\), then \(x_{t+1}(v)=0\) for all \(t\).
    \item The all-ones state is a fixed point if and only if every vertex has prime in-degree.
    \item For every \(n\ge3\), there exists a Pulse Graph on \(n\) vertices whose all-ones state is fixed.
    \item Without self-loops, \(n=3\) is the smallest number of vertices for which the all-ones state can be fixed.
\end{enumerate}
\end{proposition}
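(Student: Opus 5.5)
The four items are elementary, so the plan is to verify each one directly from the update rule, in the order stated.

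For item 1, if \(N^-(v)=\varnothing\) then \(k_t(v)\) is an empty sum, so \(k_t(v)=0\) for every \(t\). Since \(0\) is not prime, the rule forces \(x_{t+1}(v)=0\).

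For item 2, I will evaluate the update at the all-ones state \(x\equiv 1\). There every in-neighbor is active, so \(k(v)=|N^-(v)|\) is the in-degree of \(v\). The next state is therefore all-ones if and only if every in-degree \(|N^-(v)|\) is prime. This gives both directions of the equivalence at once.

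For item 3, by item 2 it suffices to build, for each \(n\ge3\), a loopless digraph on \(n\) vertices in which every in-degree is prime. The simplest choice is to make every in-degree equal to \(2\).
\begin{itemize}
\item Label the vertices \(v_0,\dots,v_{n-1}\) with indices mod \(n\).
\item Put edges \((v_{i-1},v_i)\) and \((v_{i-2},v_i)\) for every \(i\).
\item For \(n\ge3\) the vertices \(v_{i-1}\), \(v_{i-2}\) and \(v_i\) are pairwise distinct, so there are no self-loops and no repeated edges.
\item Each vertex therefore has in-degree exactly \(2\), which is prime.
\end{itemize}
This is the only step that needs a little care: the condition \(n\ge3\) is used precisely to guarantee that \(i-1\), \(i-2\) and \(i\) are distinct mod \(n\). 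As an alternative, the complete directed graph \(K_3\) also gives in-degree \(2\) when \(n=3\).

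For item 4, item 3 already shows that \(n=3\) works, so it remains to rule out \(n\le 2\). With no self-loops and no multiple edges, each in-degree is at most \(n-1\le1\). Neither \(0\) nor \(1\) is prime, so by item 2 the all-ones state is not fixed. For \(n=1\) and \(n=2\) this is immediate.

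The only implicit assumption I rely on is that \(G\) is a simple digraph, so that in-degrees are bounded by \(n-1\). The definition's use of the edge set \(E\) supports reading \(G\) this way.
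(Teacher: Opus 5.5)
Your proof is correct and follows the paper's argument item by item: the empty sum gives $k_t(v)=0$, the all-ones state gives $k(v)=|N^-(v)|$, item 3 exhibits a loopless digraph with every in-degree equal to $2$, and item 4 uses $|N^-(v)|\le n-1\le 1$ for $n\le 2$. The only difference is cosmetic: in item 3 you use the circulant with in-neighbors $v_{i-1},v_{i-2}$, whereas the paper uses the bidirected cycle with in-neighbors $v_{i-1},v_{i+1}$, and both give every vertex in-degree exactly $2$ once $n\ge 3$.
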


\begin{proof}
If \(N^-(v)=\varnothing\), then \(k_t(v)=0\), and \(0\) is not prime.

For the all-ones state, each vertex \(v\) sees exactly \(|N^-(v)|\) active in-neighbors. Hence the all-ones state is fixed exactly when every \(|N^-(v)|\) is prime.

For existence when \(n\ge3\), take the bidirected cycle on \(n\) vertices:
\[
v_i\leftrightarrow v_{i+1}
\qquad
(\text{indices mod }n).
\]
Each vertex has exactly two incoming neighbors, and \(2\) is prime.

Finally, if \(n=1\), every vertex has in-degree \(0\). If \(n=2\), every vertex has in-degree at most \(1\). Neither \(0\) nor \(1\) is prime. Hence \(n=3\) is minimal.
\end{proof}

\begin{remark}
If self-loops are allowed, then \(2\) vertices suffice: let each vertex point to itself and to the other vertex.
\end{remark}

\section{Small Pulse Graphs}

We begin by examining Pulse Graphs on at most five vertices. Because the state space is finite and the update rule is deterministic, every orbit eventually becomes periodic. Exhaustive enumeration then determines the largest possible attractor period for each \(n\leq5\).

\subsection{Eventual Periodicity}

\begin{proposition}
For every finite Pulse Graph, every initial state eventually enters an attractor.
\end{proposition}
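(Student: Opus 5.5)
The plan is a pigeonhole argument on the finite state space, together with determinism of the update. Let \(F:\{0,1\}^V\to\{0,1\}^V\) be the Pulse update map, so \(x_{t+1}=F(x_t)\). Since \(G\) is finite with \(|V|=n\), the state space has exactly \(2^n\) elements.

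First, fix an initial state \(x_0\) and consider the \(2^n+1\) states \(x_0,x_1,\dots,x_{2^n}\). By pigeonhole, two of them coincide: there are indices \(0\le i<j\le 2^n\) with \(x_i=x_j\). Choose \(i\) minimal and then \(j\) minimal for that \(i\), and put \(p=j-i\ge1\).

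Second, I use that \(F\) is a function, so every state has a unique successor. Applying \(F\) repeatedly to \(x_i=x_j\) gives \(x_{t+p}=x_t\) for all \(t\ge i\). Hence from time \(i\) on, the orbit is periodic with period \(p\).

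Third, I check that the states \(x_i,\dots,x_{i+p-1}\) are pairwise distinct. Suppose \(x_a=x_b\) with \(i\le a<b<i+p\). Then \(b-a<p\), and \(a\ge i\), so \(a\) is no smaller than the minimal \(i\); a repeat strictly inside the window would contradict the minimal choice of \(j\) for that \(i\). It is cleanest to choose the pair \((i,j)\) lexicographically minimal with \(x_i=x_j\), which gives distinctness directly. Thus \((x_i,\dots,x_{i+p-1})\) is a periodic sequence of \(p\) distinct states, that is, an attractor of period \(p\) in the sense of the Definition, and the orbit enters it by time \(i\le 2^n\).

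The only delicate step is the minimality bookkeeping in the third step, which ensures that the periodic part consists of distinct states as the Definition requires. Everything else follows immediately from finiteness and determinism.
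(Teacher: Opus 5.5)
Your proof is correct and takes the same route as the paper: pigeonhole among $x_0,\dots,x_{2^n}$ together with determinism of the update. Your distinctness check goes further than the paper, which does not address it. One small tightening is needed. When $a>i$, your minimality of $(i,j)$ is not contradicted directly, but applying $F^{\,j-a}$ to $x_a=x_b$ gives $x_i=x_j=x_{j+(b-a)}=x_{i+(b-a)}$ with $i<i+(b-a)<j$, and this does contradict the minimal choice of $j$. Alternatively, take $j$ to be the first time any state repeats, so that $x_0,\dots,x_{j-1}$ are all distinct.
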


\begin{proof}
Let \(n=|V|\). Since each vertex is either active or inactive, the system has exactly \(2^n\) possible states.

Starting from any initial state, consider the orbit
\[
x_0,x_1,x_2,\dots.
\]
Among the first \(2^n+1\) states, at least two must be equal. Because the Pulse update is deterministic, once a state repeats, all subsequent states repeat in the same order. Therefore every orbit eventually enters a periodic attractor.
\end{proof}

\subsection{Exhaustive Enumeration}

\begin{definition}
Let \(L(n)\) denote the maximum attractor period among all labeled directed loopless Pulse Graphs on \(n\) vertices.
\end{definition}

\begin{proposition}[Exact results for \(n\leq5\)]
The maximum attractor periods for Pulse Graphs on at most five vertices are
\[
\begin{array}{c|r|c}
n
& \text{number of labeled loopless directed graphs}
& L(n)\\
\hline
1 & 1                         & 1\\
2 & 4                         & 1\\
3 & 64                        & 1\\
4 & 4{,}096                   & 3\\
5 & 1{,}048{,}576             & 9
\end{array}
\]
\end{proposition}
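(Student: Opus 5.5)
The statement has two parts: the graph counts and the values of \(L(n)\). I will handle them separately. The counts are immediate. A labeled loopless directed graph on \(n\) vertices is a subset of the \(n(n-1)\) ordered pairs \((u,v)\) with \(u\neq v\), so there are \(2^{n(n-1)}\) of them. This gives \(1,4,64,4096,1048576\) for \(n=1,\dots,5\).

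For the values of \(L(n)\), the main plan is exhaustive computation. For each graph, the Pulse update is a map \(F\colon\{0,1\}^n\to\{0,1\}^n\). I will tabulate \(F\) on all \(2^n\le 32\) states, compute the functional graph, and record the longest cycle length. Every cycle length is an attractor period, and by the eventual periodicity proposition every orbit ends in such a cycle. The cost is at most about \(10^6\cdot 32\) evaluations, which is trivial. I can also reduce by vertex relabeling, since periods are invariant under isomorphism, though this is unnecessary.

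To make the computer claims checkable, I will add short hand arguments for the small cases.
\begin{itemize}
\item For \(n\le 2\), every in-degree is at most \(1\). Hence \(k_t(v)\le 1\) is never prime, so \(F\equiv 0\) and \(L=1\).
\item For \(n=3\), the only prime count available is \(2\). So \(x_{t+1}(v)=1\) exactly when both other vertices are edges into \(v\) and both are active.
\item A nontrivial cycle in the case \(n=3\) needs an active vertex at every step. The active vertex at time \(t+1\) requires two active vertices at time \(t\). Following this back, every state on the cycle has at least two active vertices.
\item If two vertices \(u,w\) are active, the third vertex \(v\) becomes active. The states with at least two active vertices are \(110,101,011,111\). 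A short case check on the edge sets then shows the cycle is a fixed point; for example, \(111\) maps to itself when all in-degrees are \(2\).
\end{itemize}

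For the lower bounds, I will give explicit witnesses and verify their orbits by hand. For \(n=4\), I need a graph with a \(3\)-cycle. For \(n=5\), I need a graph with a \(9\)-cycle, which I will extract from the search.

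The upper bounds \(L(4)\le3\) and \(L(5)\le 9\) are the main obstacle, since no structural argument is available. Here I will rely on the enumeration itself. I will make it reproducible by stating the algorithm precisely and reporting the number of graphs attaining the maximum. As an independent check, I will rerun the enumeration with isomorphism reduction and confirm that both runs give the same maxima.
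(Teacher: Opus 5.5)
Your plan is the paper's proof: enumerate all \(2^{n(n-1)}\) labeled loopless digraphs, iterate every one of the \(2^n\) states, and take the longest cycle. Your explicit period-\(3\) and period-\(9\) witnesses correspond to the examples the paper gives in the next subsection. One slip in your optional \(n=3\) check: two active vertices \(u,w\) force the third vertex \(v\) on only if both edges \(u\to v\) and \(w\to v\) are present. The argument that closes the case is that each of \(u,w\) then sees at most one active input and switches off, so a weight-\(2\) state maps to a state of weight at most \(1\). That leaves only \(111\) on a nontrivial cycle, which is impossible.
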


\begin{proof}
For each \(n\leq5\), all \(2^{n(n-1)}\) labeled loopless directed graphs were enumerated. For every graph, each of its \(2^n\) initial states was iterated until repetition, and the largest resulting attractor period was recorded. Taking the maximum over all graphs gives the values in the table.
\end{proof}

Thus \(n=4\) is the first size at which a nontrivial attractor appears.

\subsection{Examples for Four and Five Vertices}

\subsubsection{A Period-\(3\) Attractor on Four Vertices}

The graph consists of three bidirected edges incident to \(v_2\) and the directed cycle
\[
v_1\to v_0\to v_3\to v_1.
\]
Its edge set is
\[
\begin{aligned}
E=\{& v_2\leftrightarrow v_0,\ v_2\leftrightarrow v_1,\ v_2\leftrightarrow v_3,\\
& v_1\to v_0,\ v_0\to v_3,\ v_3\to v_1 \}.
\end{aligned}
\]

\begin{figure}[ht]
\centering
\begin{tikzpicture}[
>=stealth,
scale=0.74,
transform shape,
active/.style={circle,draw,fill=black,text=white,minimum size=8.5mm},
inactive/.style={circle,draw,fill=white,text=black,minimum size=8.5mm},
every path/.style={thick}
]

\begin{scope}[xshift=0cm]
\node[active]   (a2) at (0,0) {$v_2$};
\node[active]   (a0) at (0,2.2) {$v_0$};
\node[active]   (a1) at (-2,-1.3) {$v_1$};
\node[inactive] (a3) at (2,-1.3) {$v_3$};

\draw[<->] (a2) -- (a0);
\draw[<->] (a2) -- (a1);
\draw[<->] (a2) -- (a3);
\draw[->]  (a1) to[bend left=8] (a0);
\draw[->]  (a0) to[bend left=8] (a3);
\draw[->]  (a3) -- (a1);

\node at (0,-2.55) {$1110$};
\end{scope}

\draw[->,very thick] (3.2,0.15) -- (5.1,0.15);

\begin{scope}[xshift=8.2cm]
\node[active]   (b2) at (0,0) {$v_2$};
\node[active]   (b0) at (0,2.2) {$v_0$};
\node[inactive] (b1) at (-2,-1.3) {$v_1$};
\node[active]   (b3) at (2,-1.3) {$v_3$};

\draw[<->] (b2) -- (b0);
\draw[<->] (b2) -- (b1);
\draw[<->] (b2) -- (b3);
\draw[->]  (b1) to[bend left=8] (b0);
\draw[->]  (b0) to[bend left=8] (b3);
\draw[->]  (b3) -- (b1);

\node at (0,-2.55) {$1011$};
\end{scope}

\draw[->,very thick] (11.4,0.15) -- (13.3,0.15);

\begin{scope}[xshift=16.4cm]
\node[active]   (c2) at (0,0) {$v_2$};
\node[inactive] (c0) at (0,2.2) {$v_0$};
\node[active]   (c1) at (-2,-1.3) {$v_1$};
\node[active]   (c3) at (2,-1.3) {$v_3$};

\draw[<->] (c2) -- (c0);
\draw[<->] (c2) -- (c1);
\draw[<->] (c2) -- (c3);
\draw[->]  (c1) to[bend left=8] (c0);
\draw[->]  (c0) to[bend left=8] (c3);
\draw[->]  (c3) -- (c1);

\node at (0,-2.55) {$0111$};
\end{scope}

\draw[->,very thick] (13.7,-1.9) to[bend left=28] (2.9,-1.9);

\end{tikzpicture}
\caption{The full period-\(3\) attractor on the \(4\)-vertex Pulse Graph. A double-headed edge represents the two corresponding directed edges.}
\label{fig:period-three-graph}
\end{figure}

Under the Pulse update rule,
\[
1110\longmapsto1011\longmapsto0111\longmapsto1110.
\]

Throughout this orbit, \(v_2\) remains active. Among the three outer vertices, exactly one is inactive, and the inactive position moves around the directed cycle
\[
v_3\longrightarrow v_1\longrightarrow v_0\longrightarrow v_3.
\]
The three displayed states are distinct, so this is an attractor of exact period \(3\).

\subsubsection{A Period-\(9\) Attractor on Five Vertices}

The graph contains a bidirected \(K_{2,2}\) between \(\{v_1,v_2\}\) and \(\{v_3,v_4\}\). Its edge set is
\[
\begin{aligned}
E=\{& v_1\leftrightarrow v_3,\ v_1\leftrightarrow v_4,\ v_2\leftrightarrow v_3,\ v_2\leftrightarrow v_4,\\
& v_1\to v_0,\ v_2\to v_0,\ v_1\to v_2,\ v_0\to v_3,\ v_0\to v_4,\ v_3\to v_4 \}.
\end{aligned}
\]

\begin{figure}[ht]
\centering
\begin{tikzpicture}[
>=stealth,
scale=0.58,
transform shape,
active/.style={circle,draw,fill=black,text=white,minimum size=8.5mm},
inactive/.style={circle,draw,fill=white,text=black,minimum size=8.5mm},
every path/.style={thick}
]

\begin{scope}[xshift=0cm]
\node[inactive] (a0) at (0,3) {$v_0$};
\node[inactive] (a1) at (-2.3,0.9) {$v_1$};
\node[inactive] (a2) at (2.3,-1.1) {$v_2$};
\node[active]   (a3) at (2.3,0.9) {$v_3$};
\node[active]   (a4) at (-2.3,-1.1) {$v_4$};

\draw[<->] (a1) -- (a3);
\draw[<->] (a2) -- (a4);
\draw[<->] (a1) to[bend right=14] (a4);
\draw[<->] (a2) to[bend left=14] (a3);
\draw[->]  (a1) -- (a0);
\draw[->]  (a2) to[bend left=10] (a0);
\draw[->]  (a0) -- (a3);
\draw[->]  (a0) to[bend left=10] (a4);
\draw[->]  (a1) -- (a2);
\draw[->]  (a3) -- (a4);

\node at (0,-2.35) {$00011$};
\end{scope}

\draw[->,very thick] (3.6,0.55) -- (5.3,0.55);

\begin{scope}[xshift=8.8cm]
\node[inactive] (b0) at (0,3) {$v_0$};
\node[active]   (b1) at (-2.3,0.9) {$v_1$};
\node[active]   (b2) at (2.3,-1.1) {$v_2$};
\node[inactive] (b3) at (2.3,0.9) {$v_3$};
\node[inactive] (b4) at (-2.3,-1.1) {$v_4$};

\draw[<->] (b1) -- (b3);
\draw[<->] (b2) -- (b4);
\draw[<->] (b1) to[bend right=14] (b4);
\draw[<->] (b2) to[bend left=14] (b3);
\draw[->]  (b1) -- (b0);
\draw[->]  (b2) to[bend left=10] (b0);
\draw[->]  (b0) -- (b3);
\draw[->]  (b0) to[bend left=10] (b4);
\draw[->]  (b1) -- (b2);
\draw[->]  (b3) -- (b4);

\node at (0,-2.35) {$01100$};
\end{scope}

\draw[->,very thick] (12.4,0.55) -- (14.1,0.55);

\begin{scope}[xshift=17.6cm]
\node[active]   (c0) at (0,3) {$v_0$};
\node[inactive] (c1) at (-2.3,0.9) {$v_1$};
\node[inactive] (c2) at (2.3,-1.1) {$v_2$};
\node[active]   (c3) at (2.3,0.9) {$v_3$};
\node[active]   (c4) at (-2.3,-1.1) {$v_4$};

\draw[<->] (c1) -- (c3);
\draw[<->] (c2) -- (c4);
\draw[<->] (c1) to[bend right=14] (c4);
\draw[<->] (c2) to[bend left=14] (c3);
\draw[->]  (c1) -- (c0);
\draw[->]  (c2) to[bend left=10] (c0);
\draw[->]  (c0) -- (c3);
\draw[->]  (c0) to[bend left=10] (c4);
\draw[->]  (c1) -- (c2);
\draw[->]  (c3) -- (c4);

\node at (0,-2.35) {$10011$};
\end{scope}

\draw[->,very thick] (21.2,0.55) -- (22.9,0.55);
\node at (24.0,0.55) {$\cdots$};

\end{tikzpicture}
\caption{The first three states of the period-\(9\) attractor on the \(5\)-vertex Pulse Graph. A double-headed edge represents the two corresponding directed edges. The orbit continues through six more distinct states before returning to \(00011\).}
\label{fig:period-nine-graph}
\end{figure}

The corresponding period-\(9\) orbit is
\[
\begin{aligned}
00011
&\longmapsto
01100
\longmapsto
10011
\longmapsto
01101
\longmapsto
10111\\
&\longmapsto
01111
\longmapsto
11111
\longmapsto
11110
\longmapsto
10110
\longmapsto
00011.
\end{aligned}
\]

All nine states are distinct, so the orbit has exact period \(9\).

The indegrees of \(v_0,\dots,v_4\) are
\[
2,2,3,3,4,
\]
respectively. In particular, from the all-ones state, the first four vertices remain active because they receive either \(2\) or \(3\) active inputs, while \(v_4\) turns off because it receives \(4\) active inputs:
\[
11111\longmapsto11110.
\]
This transition shows the nonmonotonicity of prime activation: increasing the number of active inputs from \(3\) to \(4\) changes the output from active to inactive.

In ordinary threshold systems, additional active inputs generally do not suppress activation \citep{goles2021totalistic}. Pulse Graphs do not have this property.

\section{Long Attractor Periods}
\label{sec:long-periods}

\begin{theorem}[Rotating-register lower bound]
\label{thm:rotating-register}
For every \(n\ge5\),
\[
L(n)\ge n-3.
\]
Consequently, \(L(n)\) is unbounded.
\end{theorem}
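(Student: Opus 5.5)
The plan is to generalize the mechanism behind the period-\(3\) example on four vertices. There, an always-active hub lets each outer vertex simply copy its cycle-predecessor, so a single inactive site travels around the cycle. The obstacle in that example is that the hub's own input count depends on the cycle, so it stays active only for special cycle lengths. I will remove this dependence by using a self-sustaining core with no edges from the cycle into it. For \(n\ge5\), set \(m=n-3\ge2\) and split the vertices into a core \(\{c_1,c_2,c_3\}\) and a register \(\{u_0,\dots,u_{m-1}\}\).

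\textbf{Construction.} Make the core a bidirected triangle. Each \(c_j\) then has in-degree \(2\), with both in-neighbors in the core. By the argument of the Basic fixed-point facts proposition, once the core is all-ones it receives exactly \(2\) active inputs at every core vertex. Since \(2\) is prime, the core stays all-ones forever, whatever the register does. For each \(i\) (indices mod \(m\)), add the edges \(c_1\to u_i\) and \(u_{i-1}\to u_i\). Because \(m\ge2\), the register cycle has no self-loops; when \(m=2\) it is just a bidirected edge. The graph is therefore loopless.

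\textbf{Dynamics.} While the core is all-ones, each \(u_i\) has exactly two in-neighbors, \(c_1\) (active) and \(u_{i-1}\). So \(k_t(u_i)=1+x_t(u_{i-1})\in\{1,2\}\). Since \(1\) is not prime and \(2\) is, this gives
\[
x_{t+1}(u_i)=x_t(u_{i-1}).
\]
The register therefore performs a cyclic shift by one position.

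\textbf{Period.} Start from the core all-ones and exactly one register vertex inactive, say \(u_0\). The inactive site moves to \(u_1,u_2,\dots\) and returns to \(u_0\) after exactly \(m\) steps. The \(m\) intermediate states are pairwise distinct because the inactive site occupies a different position in each. This gives an attractor of exact period \(m=n-3\), hence \(L(n)\ge n-3\). Unboundedness follows immediately since \(n-3\to\infty\).

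\textbf{Main obstacle.} The only real difficulty is keeping the ``power supply'' constant. That requires the core to be a fixed point independent of the register, which is why no register-to-core edges are added, and it requires each register input count to land on \(\{1,2\}\) so that primality reduces exactly to copying the predecessor. The lower limit \(n\ge5\) is forced by the requirement \(m\ge2\), since \(m=1\) would need a self-loop.
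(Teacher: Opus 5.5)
Your proposal is correct and is essentially the paper's own proof. Both use a self-sustaining bidirected control triangle and one control vertex feeding every register vertex, so each register vertex copies its cycle predecessor and a single marked site circulates with period \(m=n-3\); you circulate an inactive site where the paper circulates an active one, which makes no difference. One small quibble: your remark that \(n\ge5\) is ``forced'' applies only to this construction being loopless, since the inequality itself holds trivially at \(n=4\) because \(L(4)\ge1\).
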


\begin{proof}
Set \(m=n-3\). Take three control vertices \(c_0,c_1,c_2\), joined pairwise in both directions and initialized active. Arrange the remaining vertices in the directed cycle
\[
s_0\to s_1\to\cdots\to s_{m-1}\to s_0.
\]
Add \(c_0\to s_i\) for every \(i\). Each signal vertex therefore copies its predecessor: it receives one
active input when its predecessor is inactive and two when its predecessor is active. Thus a single active signal moves once around the cycle and returns after exactly \(m=n-3\) updates, as shown in
Figure~\ref{fig:period-construction}.
\end{proof}

\begin{figure}[htbp]
\centering
\resizebox{\linewidth}{!}{
\begin{tikzpicture}[
    >=stealth,
    active/.style={
        circle,draw,fill=black,text=white,minimum size=8mm
    },
    inactive/.style={
        circle,draw,fill=white,text=black,minimum size=8mm
    },
    counted/.style={->,thick},
    uncounted/.style={->,thick,dashed,draw=gray},
    bidirected/.style={<->,thick}
]

\begin{scope}[name prefix=first-,xshift=0cm]
    \node[active] (c0) at (0,0.4) {$c_0$};
    \node[active] (c1) at (-1,1.6) {$c_1$};
    \node[active] (c2) at (1,1.6) {$c_2$};

    \draw[bidirected] (c0) -- (c1);
    \draw[bidirected] (c0) -- (c2);
    \draw[bidirected] (c1) -- (c2);

    \node[active]   (s0) at (-3,-1.8) {$s_0$};
    \node[inactive] (s1) at (-1,-2.8) {$s_1$};
    \node            (dots) at (1,-2.8) {$\cdots$};
    \node[inactive] (sm) at (3,-1.8) {$s_{m-1}$};

    \draw[counted]   (s0) -- (s1);
    \draw[uncounted] (s1) -- (dots);
    \draw[uncounted] (dots) -- (sm);
    \draw[uncounted] (sm) -- (s0);

    \draw[counted] (c0) -- (s0);
    \draw[counted] (c0) -- (s1);
    \draw[counted] (c0) -- (sm);
\end{scope}

\draw[->,very thick] (3,-0.6) -- (5,-0.6)
    node[midway,above] {update};

\begin{scope}[name prefix=second-,xshift=8cm]
    \node[active] (c0) at (0,0.4) {$c_0$};
    \node[active] (c1) at (-1,1.6) {$c_1$};
    \node[active] (c2) at (1,1.6) {$c_2$};

    \draw[bidirected] (c0) -- (c1);
    \draw[bidirected] (c0) -- (c2);
    \draw[bidirected] (c1) -- (c2);

    \node[inactive] (s0) at (-3,-1.8) {$s_0$};
    \node[active]   (s1) at (-1,-2.8) {$s_1$};
    \node            (dots) at (1,-2.8) {$\cdots$};
    \node[inactive] (sm) at (3,-1.8) {$s_{m-1}$};

    \draw[uncounted] (s0) -- (s1);
    \draw[counted]   (s1) -- (dots);
    \draw[uncounted] (dots) -- (sm);
    \draw[uncounted] (sm) -- (s0);

    \draw[counted] (c0) -- (s0);
    \draw[counted] (c0) -- (s1);
    \draw[counted] (c0) -- (sm);
\end{scope}

\node at (12,-0.6) {$\boldsymbol{\cdots}$};

\begin{scope}[name prefix=last-,xshift=16cm]
    \node[active] (c0) at (0,0.4) {$c_0$};
    \node[active] (c1) at (-1,1.6) {$c_1$};
    \node[active] (c2) at (1,1.6) {$c_2$};

    \draw[bidirected] (c0) -- (c1);
    \draw[bidirected] (c0) -- (c2);
    \draw[bidirected] (c1) -- (c2);

    \node[inactive] (s0) at (-3,-1.8) {$s_0$};
    \node[inactive] (s1) at (-1,-2.8) {$s_1$};
    \node            (dots) at (1,-2.8) {$\cdots$};
    \node[active]   (sm) at (3,-1.8) {$s_{m-1}$};

    \draw[uncounted] (s0) -- (s1);
    \draw[uncounted] (s1) -- (dots);
    \draw[uncounted] (dots) -- (sm);
    \draw[counted]   (sm) -- (s0);

    \draw[counted] (c0) -- (s0);
    \draw[counted] (c0) -- (s1);
    \draw[counted] (c0) -- (sm);
\end{scope}

\draw[->,very thick]
    (13.5,2.5) to[bend right=12]
    node[midway,above] {update}
    (2.5,2.5);
\end{tikzpicture}
}
\caption{Selected updates of the period-\(m\) construction.}
\label{fig:period-construction}
\end{figure}

\newpage
\subsection{Prime-Count Logic Gates}
\label{subsec:logic-gates}

The rotating-register construction uses one permanently active input to copy a signal. More generally, fixed active inputs allow the prime-count rule to realize several standard Boolean functions directly.

\begin{proposition}[Prime-count logic gates]
\label{prop:logic-gates}

Suppose that a vertex receives two variable inputs \(x,y\in\{0,1\}\) and
\(q\) inputs from vertices that remain permanently active. Let
\[
F_q(x,y)=
\begin{cases}
1, & q+x+y \text{ is prime},\\
0, & q+x+y \text{ is not prime}.
\end{cases}
\]
For \(0\leq q\leq 4\), the resulting functions are
\[
\begin{array}{c|ccc|c}
q
& F_q(0,0)
& F_q(0,1)=F_q(1,0)
& F_q(1,1)
& \text{Boolean function}\\
\hline
0 & 0 & 0 & 1 & x\wedge y\\
1 & 0 & 1 & 1 & x\vee y\\
2 & 1 & 1 & 0 & \operatorname{NAND}(x,y)\\
3 & 1 & 0 & 1 & \operatorname{XNOR}(x,y)\\
4 & 0 & 1 & 0 & x\oplus y
\end{array}
\]
With one variable input \(x\), one permanently active input computes
\(x\), while three permanently active inputs compute \(\neg x\).
\end{proposition}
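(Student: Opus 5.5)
The plan is a direct finite verification. The output of the vertex depends only on the total count \(k=q+x+y\), because the Pulse update is totalistic. The \(q\) permanently active in-neighbors contribute exactly \(q\) to \(k_t(v)\) at every time step, and the two variable inputs contribute \(x+y\in\{0,1,2\}\). The key observation is that \(F_q(0,1)=F_q(1,0)\) automatically, since both have count \(q+1\). So each row of the table is determined by the three integers \(q,\,q+1,\,q+2\), and the task reduces to testing primality of the integers \(0,1,\dots,6\).

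The steps, in order, are these. First I will record once that among \(0,\dots,6\) the primes are exactly \(2,3,5\). Then, for each \(q\in\{0,1,2,3,4\}\), I read off the triple \(\bigl(F_q(0,0),F_q(0,1),F_q(1,1)\bigr)\) as the primality indicators of \((q,q+1,q+2)\):
\begin{itemize}
\item \(q=0\) gives \((0,0,1)\), which is \(x\wedge y\).
\item \(q=1\) gives \((0,1,1)\), which is \(x\vee y\).
\item \(q=2\) gives \((1,1,0)\), which is NAND.
\item \(q=3\) gives \((1,0,1)\), which is XNOR.
\item \(q=4\) gives \((0,1,0)\), which is XOR.
\end{itemize}
To identify each triple with the named gate, I compare it with that gate's truth table on the inputs with \(x+y=0,1,2\). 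This is legitimate because every one of these gates is symmetric in \(x,y\).

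For the single-input claims, the count is \(q+x\). With \(q=1\) the count is \(1\) or \(2\), giving output \(0\) or \(1\), so the vertex computes \(x\). With \(q=3\) the count is \(3\) or \(4\), giving output \(1\) or \(0\), so it computes \(\neg x\).

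The only point needing care is the modelling assumption behind ``permanently active'' inputs. These must be vertices whose state is \(1\) at every time, for example the bidirected triangle \(c_0,c_1,c_2\) of Theorem~\ref{thm:rotating-register}, which is self-sustaining since each vertex sees \(2\) active inputs. I should also note that the output is the state at time \(t+1\) as a function of the inputs at time \(t\), so each gate carries a one-step delay. Beyond stating this explicitly, there is no real obstacle; the proof is a table check.
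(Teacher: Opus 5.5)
Your proposal is correct and matches the paper's proof: both reduce each row to testing primality of the counts $q$, $q+1$, $q+2$ (and of $1+x$, $3+x$ in the single-input cases), and your five triples and gate identifications agree with the table. Your extra remarks, that the named gates are symmetric in $x,y$ and that each gate acts with a one-step delay, are harmless additions the paper leaves implicit.
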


\begin{proof}
For two variable inputs, the displayed table is obtained directly by testing primality of \(q+x+y\) for the three possible values \(x+y=0,1,2\).

For one variable input, the counts \(1+x\) are \(1\) and \(2\), so the output equals \(x\). The counts \(3+x\) are \(3\) and \(4\), so the output equals \(\neg x\).
\end{proof}

\begin{remark}
A bidirected triangle initialized in the all-ones state supplies three permanently active control vertices: each control receives exactly two active inputs from the other controls. In particular, Proposition \ref{prop:logic-gates} shows that Pulse Graphs can implement NAND gates and therefore arbitrary feedforward Boolean circuits when inputs are held fixed.
\end{remark}

The exponential-period construction below requires only the copy and XNOR identities.

\subsection{Exponential Attractor Periods}
\label{subsec:exponential-periods}

Maximal-length linear feedback registers constructed from primitive polynomials are classical \citep{zierler1959linear}; full-length nonlinear registers and Gray codes provide related state-space traversals \citep{fredricksen1982full,mutze2023gray}. In the automata-network setting, \citet{aracena2026bounded} realize a fixed point together with a cycle through all remaining configurations using bounded degree. The issue here is different: the local functions cannot be chosen freely, since every vertex must obey the same prime-count activation rule.

The following exponential-period theorem is the main result. The rotating register proves that \(L(n)\) is unbounded; by adding affine feedback, the same three control vertices instead produce a cycle through all but one of the \(2^{n-3}\) signal states.

\begin{theorem}[Exponential attractor periods]
\label{thm:exponential-periods}
For every \(n\geq 5\),
\[
2^{n-3}-1\leq L(n)\leq 2^n-n-1.
\]
Consequently,
\[
L(n)=\Theta(2^n).
\]

For \(n\geq 6\), the lower bound can be realized by a loopless Pulse Graph with maximum in-degree at most five and at most \(5n-15\) directed edges.
\end{theorem}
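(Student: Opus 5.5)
The upper bound is a counting argument: I would rule out $n+1$ states from lying on any attractor of period at least $2$. The lower bound comes from realizing an affine maximal-length feedback register on $m=n-3$ signal vertices, driven by the same three control vertices as in Theorem~\ref{thm:rotating-register}. Then $L(n)=\Theta(2^n)$ follows at once.

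\emph{Upper bound.} The all-zero state is a fixed point in every Pulse Graph, because every count $k_t(v)$ is $0$, which is not prime. Now consider a state with exactly one active vertex $u$. Every vertex then sees $0$ or $1$ active inputs, neither of which is prime, so the state maps to the all-zero state. These $n$ states are therefore transient or lie in the basin of the fixed point $0$. Together with $0$ itself, that gives $n+1$ states that cannot lie on a cycle of length at least $2$. Hence any nontrivial attractor has period at most $2^n-n-1$. Period $1$ is trivially within the bound for $n\ge 2$, so the bound holds for all $n\ge5$.

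\emph{Lower bound.} Keep the bidirected control triangle $c_0,c_1,c_2$ initialized active; by the remark after Proposition~\ref{prop:logic-gates}, it stays active forever. Choose a primitive polynomial $f(z)=z^m+a_{m-1}z^{m-1}+\cdots+a_1z+1$ over $\mathbb{F}_2$, and use the Galois form of the register on signal vertices $s_0,\dots,s_{m-1}$.
\begin{itemize}
\item $s_0$ copies $s_{m-1}$. It has edges $s_{m-1}\to s_0$ and $c_0\to s_0$.
\item For $i\ge1$ with $a_i=0$, $s_i$ copies $s_{i-1}$ using one control edge, as in the rotating register.
\item For $i\ge1$ with $a_i=1$, $s_i$ receives $s_{i-1}$, $s_{m-1}$ and all three controls. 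By Proposition~\ref{prop:logic-gates} with $q=3$, it computes $\operatorname{XNOR}(s_{i-1},s_{m-1})=s_{i-1}\oplus s_{m-1}\oplus1$.
\end{itemize}
All of these edges join distinct vertices, so the graph is loopless.

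The induced map on the signal vector is affine, $s\mapsto As+b$, where $A$ is the companion matrix of $f$. Since $f(1)=1\neq0$, the matrix $A-I$ is invertible, so the affine map has a unique fixed point $s^\ast$. Conjugating by the translation $s\mapsto s-s^\ast$ turns it into the linear map $A$. Because $f$ is primitive, $A$ has order $2^m-1$ and permutes the nonzero vectors in a single cycle. Hence the register has one attractor of period $2^m-1=2^{n-3}-1$. For $m=2$, i.e. $n=5$, the polynomial $z^2+z+1$ works.

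\emph{Degree and edge count (the main obstacle).} Each in-degree is at most $5$: controls have in-degree $2$, copy vertices $2$, $s_0$ $2$, and XNOR vertices $5$. Let $t$ be the number of nonzero middle coefficients $a_1,\dots,a_{m-1}$. The edge total is
\[
6+2(m-t)+5t=5m+6-3(m-t).
\]
So the bound $5n-15=5m$ holds exactly when $t\le m-2$. To get this, I need, for every $m\ge3$ (that is, $n\ge6$), a primitive polynomial of degree $m$ that is not the one with all middle coefficients equal to $1$. This holds because the number of primitive polynomials of degree $m$, namely $\varphi(2^m-1)/m$, is at least $2$ for $m\ge3$, so some choice avoids the single all-ones pattern. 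I expect this step, and checking that the count of edges from controls to XNOR vertices matches this bookkeeping, to be where the care is needed.
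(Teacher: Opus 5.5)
\textbf{Gap: the construction is not loopless.} Your upper bound, the affine conjugacy to the companion map, and the edge bookkeeping all agree with the paper. In your Galois register, a tapped stage $s_i$ receives $s_{i-1}$ and $s_{m-1}$. For $i=m-1$ with $a_{m-1}=1$, this requires the edge $s_{m-1}\to s_{m-1}$, which is a self-loop. So your claim that ``all of these edges join distinct vertices'' is false in general. Since $L(n)$ is defined over loopless graphs, this undermines the lower bound itself, not only the sparsity clause.

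Your $n=5$ case shows the failure directly. The polynomial $z^2+z+1$ has $a_1=a_{m-1}=1$, so $s_1$ would need itself as an input. Because it is the only primitive quadratic, the scheme cannot handle $n=5$ at all. That is why the paper settles $n=5$ with the exhaustive value $L(5)=9\ge 3$ instead; $L(4)=3$ plus an isolated vertex would also work.

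\textbf{The fix.} Choose the primitive polynomial with $a_{m-1}=0$, i.e.\ the minimal polynomial of a primitive element of trace zero. The paper obtains this for every $m\ge 3$ from Cohen's prescribed-trace theorem. Reciprocals of primitive polynomials are primitive, and the reciprocal's $z^{m-1}$-coefficient is $a_1$. So it would suffice to find a primitive polynomial with $a_1=0$ or $a_{m-1}=0$, but you still need an existence argument for that.

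Your count $\varphi(2^m-1)/m\ge 2$ only guarantees that \emph{some} middle coefficient vanishes. That gives $t\le m-2$ and hence the $5n-15$ edge bound, but it does not prevent the self-loop. Once you impose $a_{m-1}=0$, you get $t\le m-2$ automatically, and the rest of your argument goes through unchanged.
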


\begin{proof}
Every state of Hamming weight at most one maps to the all-zero state, since no vertex can receive a prime number of active inputs. The all-zero state is fixed. Hence a nontrivial attractor contains neither the all-zero state nor any of the \(n\) singleton states, and therefore has period at most
\[
2^n-(n+1)=2^n-n-1.
\]
For the lower bound, the case \(n=5\) follows from the exact value
\[
L(5)=9>2^{5-3}-1.
\]

Now suppose \(n\geq 6\), and put
\[
m=n-3.
\]
Work over the field \(\mathbb{F}_2\). By Cohen's prescribed-trace theorem \citep{cohen1990trace}, there exists, for every \(m\geq3\), a primitive element
\[
\alpha\in\mathbb F_{2^m}
\]
with
\[
\operatorname{Tr}_{\mathbb F_{2^m}/\mathbb F_2}(\alpha)=0.
\]
Let
\[
p(z)=z^m+a_{m-1}z^{m-1}+\cdots+a_1z+1
\]
be the minimal polynomial of \(\alpha\) over \(\mathbb F_2\).  Since \(\alpha\) is primitive, \(p\) is a primitive polynomial of degree \(m\). Moreover, in characteristic two, the coefficient \(a_{m-1}\) equals the
trace of \(\alpha\), and hence
\[
a_{m-1}=0.
\]
This zero coefficient will prevent a self-loop at the final register stage.

Let \(A:\mathbb F_2^m\to\mathbb F_2^m\) be the companion transformation defined by
\[
(Ax)_0=x_{m-1}
\]
and
\[
(Ax)_i=x_{i-1}+a_i x_{m-1}, \qquad 1\leq i<m.
\]
The characteristic polynomial of \(A\) is \(p\).  Because \(p\) is
primitive, \(A\) has multiplicative order
\[
2^m-1.
\]
Equivalently, after identifying \(\mathbb F_2^m\) with
\(\mathbb F_{2^m}\), the transformation \(A\) is multiplication by a primitive element. It fixes \(0\) and permutes all \(2^m-1\) nonzero
vectors in one cycle.

\begin{example}[A tap pattern for \(m=3\)]
For \(m=3\), the primitive polynomial \(p(z)=z^3+z+1\) gives
\(a_1=1\) and \(a_2=0\). The linear register
\[
y_0'=y_2,\qquad y_1'=y_0+y_2,\qquad y_2'=y_1
\]
has the \(7=2^3-1\) cycle
\[
100\to010\to001\to110\to011\to111\to101\to100.
\]
The Pulse implementation replaces the tapped XOR stage by XNOR:
\[
x_0'=x_2,\qquad x_1'=1+x_0+x_2,\qquad x_2'=x_1,
\]
which is an affine translate of the same maximum-period register.
\end{example}

Figure~\ref{fig:affine-feedback-register} gives a graph-level schematic of the affine feedback mechanism used below.

\begin{figure}[!htbp]
\centering
\begin{tikzpicture}[
>=stealth,
vertex/.style={
circle,
draw,
minimum size=8mm,
inner sep=0pt
},
tap/.style={
circle,
draw,
double,
double distance=1pt,
minimum size=8mm,
inner sep=0pt
},
control/.style={
circle,
draw,
fill=black,
text=white,
minimum size=8mm,
inner sep=0pt
},
register/.style={->,thick},
bias/.style={->,thin},
extra/.style={->,thick,dashed},
return/.style={->,thin,densely dotted},
every node/.style={font=\small}
]

\node[control] (c0) at (-4.3,2.2) {$c_0$};
\node[control] (c1) at (-5.3,3.4) {$c_1$};
\node[control] (c2) at (-3.3,3.4) {$c_2$};

\draw[register] (c0) -- (c1);
\draw[register] (c1) -- (c0);
\draw[register] (c0) -- (c2);
\draw[register] (c2) -- (c0);
\draw[register] (c1) -- (c2);
\draw[register] (c2) -- (c1);

\node[align=center] at (-4.3,4.15)
{active control\\triangle};

\node[vertex] (s0) at (-4.3,0) {$s_0$};
\node[vertex] (s1) at (-2.5,0) {$s_1$};
\node at (-1.3,0) {$\cdots$};
\node[tap] (si) at (0,0) {$s_i$};
\node at (1.3,0) {$\cdots$};
\node[vertex] (sm2) at (2.5,0) {$s_{m-2}$};
\node[vertex] (sm1) at (4.3,0) {$s_{m-1}$};

\draw[register] (s0) -- (s1);
\draw[register] (s1) -- (-1.65,0);
\draw[register] (-0.95,0) -- (si);
\draw[register] (si) -- (0.95,0);
\draw[register] (1.65,0) -- (sm2);
\draw[register] (sm2) -- (sm1);
\draw[register] (sm1) to[bend right=32] (s0);

\draw[bias] (c0) -- (s0);
\draw[bias] (c0) to[bend left=10] (s1);
\draw[bias] (c0) to[bend left=12] (si);
\draw[bias] (c0) to[bend left=14] (sm2);
\draw[bias] (c0) to[bend left=16] (sm1);

\draw[extra] (c1) to[bend right=13] (si);
\draw[extra] (c2) to[bend left=13] (si);
\draw[extra] (sm1) to[bend left=28] (si);

\node[align=right, font=\scriptsize] at (4.5,2.95)
{Only one tapped stage is drawn;\\
dashed edges to other tapped stages are omitted.};

\draw[register] (-4.2,-1.35) -- (-3.3,-1.35);
\node[anchor=west] at (-3.15,-1.35) {register edge};

\draw[bias] (-0.6,-1.35) -- (0.3,-1.35);
\node[anchor=west] at (0.45,-1.35) {\(c_0\) bias to every signal vertex};

\draw[extra] (-4.2,-1.85) -- (-3.3,-1.85);
\node[anchor=west] at (-3.15,-1.85) {extra inputs go to stages \(s_i\) with \(a_i=1\), where \(1\le i\le m-2\)};

\end{tikzpicture}
\caption{Schematic realization of the affine feedback register. Every signal stage receives its predecessor and the active control \(c_0\). If \(a_i=1\), the stage also receives \(c_1,c_2\), and \(s_{m-1}\), so it computes XNOR. Only one tapped stage is drawn; analogous dashed edges to other tapped stages are omitted in this example figure. Cohen's trace-zero condition gives \(a_{m-1}=0\), so the final stage \(s_{m-1}\) is not tapped and no self-loop is needed.}
\label{fig:affine-feedback-register}
\end{figure}

This construction uses the same control triangle and \(c_0\)-bias mechanism as the rotating register; the only new feature is that tapped stages receive the extra inputs \(c_1,c_2\), and \(s_{m-1}\), causing them to compute XNOR.

We now explicitly realize an affine version of \(A\) as a Pulse Graph. Take three control vertices \(c_0,c_1,c_2\) and signal vertices \(s_0,\ldots,s_{m-1}\). Join every pair of distinct controls in both directions and initialize all controls active. Each control then receives exactly two active inputs, so all controls remain active because \(2\) is
prime.

Give \(s_0\) inputs \(c_0\) and \(s_{m-1}\). Then
\[
x_{t+1}(s_0)=x_t(s_{m-1}).
\]
For \(1\le i<m\), if \(a_i=0\), give \(s_i\) inputs \(c_0\) and \(s_{i-1}\), so
\[
x_{t+1}(s_i)=x_t(s_{i-1}).
\]
If \(a_i=1\), give \(s_i\) inputs \(c_0,c_1,c_2,s_{i-1},s_{m-1}\). Its active-input count is
\[
3+x_t(s_{i-1})+x_t(s_{m-1}),
\]
so, since \(3,5\) are prime and \(4\) is not,
\[
x_{t+1}(s_i)=1+x_t(s_{i-1})+x_t(s_{m-1})
\qquad \text{in }\mathbb F_2.
\]
Thus tapped stages compute XNOR. Since \(a_{m-1}=0\), the final stage is not tapped, so no self-loop is needed.

Writing
\[
x_t=
\bigl(
x_t(s_0),\ldots,x_t(s_{m-1})
\bigr)^{\mathsf{T}},
\]
the signal update is the affine transformation
\[
x_{t+1}=T(x_t)=Ax_t+b,
\]
where
\[
b=(0,a_1,\ldots,a_{m-1})^{\mathsf{T}}.
\]

Since \(p\) is primitive and \(m\geq 3\), the number \(1\) is not a root of \(p\). Hence
\[
\det(I-A)=p(1)\neq 0,
\]
so \(I-A\) is invertible. There is a unique vector \(z\in\mathbb{F}_2^m\) such that
\[
(I-A)z=b.
\]

Set
\[
y_t=x_t+z.
\]
Since \(Az+b=z\), we obtain
\[
\begin{aligned}
y_{t+1}
&=x_{t+1}+z\\
&=A(y_t+z)+b+z\\
&=Ay_t.
\end{aligned}
\]
Thus \(T\) is conjugate by translation to \(A\).

The affine map \(T\) therefore has one fixed point and one cycle containing all its other
\[
2^m-1
\]
states. With the controls fixed, the Pulse Graph has an attractor of exact
period
\[
2^m-1=2^{n-3}-1.
\]
This proves the lower bound.

Every signal vertex has in-degree either two or five, while every control vertex has in-degree two. If
\[
r=\bigl|\{i:1\leq i<m,\ a_i=1\}\bigr|,
\]
then \(r\leq m-2\), since \(a_{m-1}=0\). The graph has six edges in the control triangle, \(m\) copy-bias edges from \(c_0\), \(m\) register edges, and \(3r\) additional XNOR inputs. Thus its number of edges is
\[
\begin{aligned}
6+m+m+3r
&\leq 6+2m+3(m-2)\\
&=5m\\
&=5n-15.
\end{aligned}
\]

Finally, for \(n\geq 5\),
\[
2^{n-3}-1\geq 2^{n-4}=\frac{1}{16}2^n.
\]
Together with the upper bound, this proves
\[
L(n)=\Theta(2^n).
\]
\end{proof}

\begin{figure}[!htbp]
\centering
\begin{tikzpicture}[
>=stealth,
vertex/.style={
    circle,
    draw,
    minimum size=8mm,
    inner sep=0pt
},
tap/.style={
    circle,
    draw,
    double,
    fill=black,
    text=white,
    double distance=1pt,
    minimum size=8mm,
    inner sep=0pt
},
active/.style={
    circle,
    draw,
    fill=black,
    text=white,
    minimum size=8mm,
    inner sep=0pt
},
edge/.style={->,thick},
bias/.style={->,thin},
extra/.style={->,thick,dashed},
every node/.style={font=\small}
]

\node[active]       (v0) at (0,4.2) {$v_0$};
\node[active]       (v2) at (0,2.6) {$v_2$};
\node[active]       (v1) at (-2.0,1.0) {$v_1$};
\node[vertex]       (v3) at (2.0,1.0) {$v_3$};

\draw[<->,thick] (v2) -- (v0);
\draw[<->,thick] (v2) -- (v1);
\draw[<->,thick] (v2) -- (v3);

\draw[edge] (v1) to[bend left=8] (v0);
\draw[edge] (v0) to[bend left=8] (v3);
\draw[edge] (v3) -- (v1);

\node[font=\bfseries] at (5.7,3.7) {Period-\(3\) control module};

\node[align=center] at (5.7,3.0)
{
$1110\to1011\to0111\to1110$
};

\node[vertex] (s0) at (-2.5,-1.6) {$s_0$};
\node[tap]    (s1) at (0,-1.6) {$s_1$};
\node[vertex] (s2) at (2.5,-1.6) {$s_2$};

\draw[edge] (s0) -- (s1);
\draw[edge] (s1) -- (s2);
\draw[edge] (s2) to[bend left=40] (s0);

\draw[extra] (s2) to[bend left=28] (s1);

\node[font=\bfseries] at (5.9,-0.6) {Period-\(7\) affine register};

\node[align=center] at (7.9,-1.3)
{
$010\to011\to101\to110\to001\to100\to000\to010$
};

\draw[bias] (v2) -- (s0);
\draw[bias] (v2) -- (s1);
\draw[bias] (v2) -- (s2);

\draw[extra] (v0) to[out=-180,in=110] (s1);
\draw[extra] (v1) -- (s1);
\draw[extra] (v3) -- (s1);

\node[draw,rounded corners,thick,align=center] at (0,-3.6)
{
Overall attractor period:
$\operatorname{lcm}(3,7)=21$.
};

\end{tikzpicture}
\caption{A \(7\)-vertex Pulse Graph obtained by stacking the period-\(3\) control module above a period-\(7\) affine register. The vertex \(v_2\) points to all signal vertices, while \(v_0,v_1,v_3\) feed into the tapped stage \(s_1\). Together with the feedback input from \(s_2\), this makes \(s_1\) compute XNOR, yielding a period-\(7\) signal cycle. Since the control module has period \(3\), the combined system has period \(21\).}
\label{fig:n7-period21-vertical}
\end{figure}

\begin{corollary}
For every odd \(n\ge 7\),
\[
L(n)\ge 3\left(2^{n-4}-1\right).
\]
\end{corollary}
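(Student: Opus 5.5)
The plan is to repeat the construction in Theorem~\ref{thm:exponential-periods}, with two changes. The constant control triangle is replaced by the four-vertex period-$3$ module from the small-examples section, as in Figure~\ref{fig:n7-period21-vertical}. The register is then built on the remaining $m=n-4$ signal vertices. Since $n$ is odd, $m$ is odd and $m\ge 3$.

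By Cohen's theorem as used in the proof of Theorem~\ref{thm:exponential-periods}, I pick a primitive polynomial $p$ of degree $m$ with $a_{m-1}=0$, and let $A$ be its companion transformation. The control module consists of $v_0,\dots,v_3$ with the edges given earlier, started in the state $1110$. No edge goes from a signal vertex to a control vertex, so the controls follow the orbit $1110\to1011\to0111\to1110$ exactly as before. Two facts about this orbit are used below: $v_2$ is active at every step, and exactly two of $v_0,v_1,v_3$ are active at every step.

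The signal wiring is as follows. Each $s_i$ receives $v_2$ and its register predecessor; for $s_0$ the predecessor is $s_{m-1}$. If $a_i=1$ with $1\le i\le m-2$, then $s_i$ also receives $v_0,v_1,v_3$ and $s_{m-1}$.
\begin{itemize}
\item An untapped stage has active-input count $1+x$, so it copies its predecessor.
\item A tapped stage has active-input count $1+2+x_t(s_{i-1})+x_t(s_{m-1})$. The constant part is $3$ at every time step, even though the individual controls are changing.
\end{itemize}
By Proposition~\ref{prop:logic-gates}, a tapped stage therefore computes XNOR. The signal update is the same affine map $T(x)=Ax+b$ as in Theorem~\ref{thm:exponential-periods}, independent of the control phase. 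The graph is loopless, because $a_{m-1}=0$ means $s_{m-1}$ never feeds itself. By the translation-conjugacy argument there, $T$ has a cycle of length $2^m-1$.

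The full state is a pair (control state, signal state), and the two components evolve independently. So an orbit in which the control is on its $3$-cycle and the signal is on its $(2^m-1)$-cycle is periodic with exact period $\operatorname{lcm}(3,2^m-1)$. Since $m$ is odd, $2^m\equiv 2\pmod 3$, so $2^m-1\equiv 1\pmod 3$ and $\gcd(3,2^m-1)=1$. The exact period is therefore
\[
3(2^m-1)=3(2^{n-4}-1),
\]
which gives $L(n)\ge 3(2^{n-4}-1)$.

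I expect the main point needing care to be the verification that the tapped-stage offset is constant. This rests on the fact, read off from the displayed orbit, that $v_2$ stays active and the inactive position among $v_0,v_1,v_3$ simply rotates. The oddness of $n$ enters only through the coprimality $\gcd(3,2^{n-4}-1)=1$. For even $n$ the least common multiple would collapse to $2^{n-4}-1$, which is why the corollary is restricted to odd $n$.
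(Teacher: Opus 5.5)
Your proposal is correct and takes essentially the same route as the paper. You stack the period-$3$ module on a trace-zero affine register with $m=n-4$: $v_2$ biases every stage and $v_0,v_1,v_3$ feed the tapped stages, so the constant offset is $1$ at copy stages and $3$ at XNOR stages throughout the control orbit. You then use $2^m-1\equiv 1\pmod 3$ for odd $m$ to get exact period $\operatorname{lcm}(3,2^m-1)=3(2^m-1)$. You also spell out the wiring and the check that the offset is constant more explicitly than the paper does; the paper's proof simply notes that $v_2$ is always active and exactly three controls are active.
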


\begin{proof}
Set \(m=n-4\). Initialize the control module on its period-\(3\) orbit and the affine register on its period-\(2^m-1\) orbit. Along the period-\(3\) control orbit, \(v_2\) is always active and exactly three control vertices are active. Hence untapped signal stages still compute copy, while tapped stages still compute XNOR, so the signal register has period \(2^m-1\). Since \(m\) is odd,
\[
2^m-1\equiv1\pmod3.
\]
Thus the control and signal periods are coprime, and the combined period is \(3(2^m-1)\).
\end{proof}

\section{Complete Directed Pulse Graphs}
\label{sec:complete-graphs}

Let \(K_n^\rightarrow\) denote the complete directed graph on \(n\) vertices with no self-loops: for every pair \(u\neq v\), the edge \(u\to v\) is present.

\begin{theorem}[Complete graph update formula]
Let \(S_t\subseteq V\) be the active set at time \(t\), and let \(m=|S_t|\). In \(K_n^\rightarrow\), the next active set is
\[
S_{t+1}
=
\begin{cases}
V, & \text{if } m-1 \text{ is prime and } m \text{ is prime},\\
S_t, & \text{if } m-1 \text{ is prime and } m \text{ is not prime},\\
V\setminus S_t, & \text{if } m-1 \text{ is not prime and } m \text{ is prime},\\
\varnothing, & \text{if } m-1 \text{ is not prime and } m \text{ is not prime}.
\end{cases}
\]
\end{theorem}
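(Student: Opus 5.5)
The plan is to compute, for each vertex \(v\), the number \(k_t(v)\) of active in-neighbors in \(K_n^\rightarrow\). It depends only on whether \(v\) itself is active, and the four cases of the statement then come from combining two independent primality tests.

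First, since every ordered pair \(u\neq v\) is an edge and there are no self-loops, \(N^-(v)=V\setminus\{v\}\). Hence
\[
k_t(v)=\bigl|S_t\setminus\{v\}\bigr|=
\begin{cases}
m-1, & v\in S_t,\\
m, & v\notin S_t.
\end{cases}
\]
Applying the update rule, a currently active vertex is active at time \(t+1\) exactly when \(m-1\) is prime. A currently inactive vertex is active at time \(t+1\) exactly when \(m\) is prime.

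Second, \(S_{t+1}\) is the union of two pieces. Define
\[
A=\begin{cases}S_t,& m-1\text{ prime},\\ \varnothing,&\text{otherwise},\end{cases}
\qquad
B=\begin{cases}V\setminus S_t,& m\text{ prime},\\ \varnothing,&\text{otherwise}.\end{cases}
\]
Then \(S_{t+1}=A\cup B\), where \(A\) collects the surviving active vertices and \(B\) the newly activated inactive ones. Running through the four truth-value combinations gives \(V\), \(S_t\), \(V\setminus S_t\), and \(\varnothing\) respectively, which is exactly the displayed formula.

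No step is a real obstacle. The only points needing care are the edge cases. When \(m=0\), the quantity \(m-1=-1\) is not prime, but there are no active vertices, so the first test is vacuous and the formula still holds. When \(m=n\), the complement is empty, so the second test is vacuous.

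It is also worth noting, though the proof does not need it, that \(m-1\) and \(m\) are both prime only when \(m=3\). This remark will be useful for the later classification of attractors.
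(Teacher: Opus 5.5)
Your proof is correct and follows the paper's argument exactly: an active vertex sees \(m-1\) active in-neighbors and an inactive vertex sees \(m\), and the two primality tests are then combined into the four cases. Your explicit decomposition \(S_{t+1}=A\cup B\) and the checks of the boundary cases \(m=0\) and \(m=n\) add detail that the paper leaves implicit, but they do not change the route.
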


\begin{proof}
If \(v\in S_t\), then \(v\) has exactly \(m-1\) active in-neighbors, since self-loops are excluded. Hence active vertices remain active exactly when \(m-1\) is prime.

If \(v\notin S_t\), then \(v\) has exactly \(m\) active in-neighbors. Hence inactive vertices become active exactly when \(m\) is prime.

Combining these two facts gives the four cases.
\end{proof}

\begin{theorem}[Attractor classification for complete Pulse Graphs]
In \(K_n^\rightarrow\), every orbit eventually enters either a fixed point or a complement two-cycle \(S \longleftrightarrow V\setminus S\).
\end{theorem}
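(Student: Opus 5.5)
The plan is to use the complete graph update formula directly. By that formula, the next active set always lies in \(\{V, S_t, V\setminus S_t, \varnothing\}\), and which case occurs depends only on \(m=|S_t|\). I will therefore track only the size \(m\) and the set, and check by cases that within a bounded number of steps the orbit lands on a state that is either fixed or maps to its complement and back.

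First, I handle the four cases at a state \(S\) with \(|S|=m\).

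If \(m-1\) and \(m\) are both prime, then \(m=3\) (the only consecutive primes are \(2,3\)). The next state is \(V\), of size \(n\), and we then restart from \(V\).

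If \(m-1\) is prime and \(m\) is not, then \(S\) is a fixed point.

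If neither is prime, the next state is \(\varnothing\), which is fixed, since \(0\) and \(-1\) are not prime.

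If \(m-1\) is not prime and \(m\) is prime, the next state is \(V\setminus S\), of size \(n-m\). I then apply the same case analysis to \(n-m\):
\begin{itemize}
\item if \(n-m-1\) is prime and \(n-m\) is not, the complement is fixed;
\item if neither is prime, we go to \(\varnothing\);
\item if \(n-m-1\) is not prime and \(n-m\) is prime, we return to \(S\), giving the complement two-cycle \(S\leftrightarrow V\setminus S\);
\item if both are prime, then \(n-m=3\) and we go to \(V\).
\end{itemize}

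It remains to analyze the orbit of \(V\), where \(m=n\). By the same four cases:
\begin{itemize}
\item \(V\) is fixed (if \(n-1\) is prime and \(n\) is not, or if both are prime);
\item \(V\) maps to \(\varnothing\), which is fixed;
\item \(V\) maps to \(V\setminus V=\varnothing\), which is fixed.
\end{itemize}
Notably, \(V\leftrightarrow\varnothing\) is not a two-cycle, because \(\varnothing\) is fixed. So from \(V\) the orbit reaches a fixed point in at most one more step.

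Assembling these cases, every orbit reaches either a fixed point (one of \(\varnothing\), \(V\), or a set whose size \(m\) has \(m-1\) prime and \(m\) composite or equal to \(1\)), or a pair \(S, V\setminus S\) with \(|S|\) and \(n-|S|\) both prime and \(|S|-1\), \(n-|S|-1\) both nonprime. The key point is that every transition sends \(S\) to a set in \(\{V,S,V\setminus S,\varnothing\}\). The only non-terminal moves are therefore "go to \(V\)" and "go to the complement", and the argument above shows these close up within at most three updates.

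The main obstacle is only bookkeeping: making sure no case chains indefinitely. The key facts for this are that \(\varnothing\) is fixed and that \(V\) resolves to a fixed point in at most one step. I would organize the write-up as a finite case tree on \((m, n-m)\).
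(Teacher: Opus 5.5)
Your argument is correct, but it is organized differently from the paper's proof of this theorem. The paper gives a static characterization. From the four-image fact \(S_{t+1}\in\{\varnothing,S_t,V\setminus S_t,V\}\) it reads off exactly which sets are fixed: \(\varnothing\); \(V\) when \(n-1\) is prime; and proper nonempty sets with \(|S|-1\) prime and \(|S|\) not prime. It likewise reads off which pairs form complement two-cycles: \(|S|\) and \(n-|S|\) prime, neither equal to \(3\). It does not spell out why no other periodic behavior can arise. You instead follow the orbit itself through a finite case tree on \((|S|,n-|S|)\), using that \(\varnothing\) is fixed and that \(V\) resolves in at most one step. This is essentially the argument the paper postpones to its transient-time theorem. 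Your route makes the ``eventually enters'' claim explicit; in particular it rules out any longer cycle, e.g.\ one passing through \(V\). It also yields the bound of three updates as a byproduct. The paper's route is instead organized around the exact membership criteria that feed the counting formulas for \(F(n)\) and \(C(n)\), and your closing summary recovers those criteria too. One small slip there: since \(m-1\) prime forces \(m\geq 3\), the clause ``\(m\) composite or equal to \(1\)'' should simply read ``\(m\) composite.''
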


\begin{proof}
By the complete graph update formula, every active set \(S\subseteq V\) maps to exactly one of
\[
\varnothing,\qquad S,\qquad V\setminus S,\qquad V.
\]
The fixed points are exactly:
\begin{enumerate}
    \item the empty state \(\varnothing\), which is always fixed;
    \item the all-ones state \(V\), which is fixed exactly when \(n-1\) is prime;
    \item proper nonempty sets \(S\subsetneq V\) satisfying
    \[
    |S|-1 \text{ is prime}
    \qquad\text{and}\qquad
    |S| \text{ is not prime}.
    \]
\end{enumerate}

The complement two-cycles are exactly those
\[
S \longleftrightarrow V\setminus S
\]
where
\[
|S| \text{ and } |V\setminus S| \text{ are prime,}
\]
and neither size equals \(3\).

Indeed, \(S\mapsto V\setminus S\) occurs exactly when inactive vertices turn on and active vertices turn off. In \(K_n^\rightarrow\), this means
\[
|S| \text{ is prime}
\qquad\text{and}\qquad
|S|-1 \text{ is not prime}.
\]
For prime \(|S|\), the second condition fails only when \(|S|=3\). Applying the same condition to \(V\setminus S\) gives the stated condition for a complement two-cycle.
\end{proof}

\begin{definition}
Fix \(K_n^\rightarrow\) and an initial active set \(S_0\subseteq V\). Starting from \(S_0\), the Pulse update produces a sequence
\[
S_0,S_1,S_2,\dots.
\]
Since there are only finitely many possible active sets, this sequence eventually repeats.

The \emph{transient time} \(t(S_0)\) is the number of steps before the sequence reaches its eventual repeating part. Equivalently, \(t(S_0)\) is the first time \(t\) such that \(S_t\) belongs to the final periodic orbit.

The \emph{maximum transient time} is
\[
T(n)=\max_{S_0\subseteq V} t(S_0).
\]
Thus \(T(n)\) is the worst-case number of steps needed, over all initial active sets, before the dynamics reach their eventual fixed point or cycle.
\end{definition}

\begin{theorem}[Constant transient bound for complete Pulse Graphs]

For the complete directed loopless Pulse Graph \(K_n^\rightarrow\),
\[
T(n)\leq 3.
\]
Thus the maximum transient time is bounded independently of \(n\). In particular,
\[
T(n)=O(1).
\]
\end{theorem}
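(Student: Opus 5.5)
The plan is to use the complete graph update formula to reduce the dynamics to the single parameter \(m=|S_t|\). I then classify the one-step behaviour of each size class and chase the short chains that result. By that theorem, the image of \(S\) depends only on whether \(m-1\) and \(m\) are prime. The image is always one of \(\varnothing\), \(S\), \(V\setminus S\), \(V\). So the orbit is determined by a walk among very few set sizes, and it suffices to bound the length of that walk before it hits a fixed point or a complement two-cycle, as classified in the attractor classification theorem. Since \(t(S_0)\) is at most the first time the orbit lands on a periodic state, it is enough to exhibit, for each \(S_0\), some \(t\le 3\) with \(S_t\) periodic.

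I split into four cases by the primality of \(m-1\) and \(m\).

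\emph{(a)} If \(m-1\) is prime and \(m\) is not, then \(S\) is fixed and \(t=0\).

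\emph{(b)} If neither is prime, then \(S\mapsto\varnothing\), which is fixed, so \(t\le 1\).

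\emph{(c)} If both are prime, then \(m=3\), since \(2,3\) are the only consecutive primes, and \(S\mapsto V\). Next I analyse \(V\) itself, with \(|V|=n\). If \(n-1\) is prime, \(V\) is fixed. If \(n-1\) is not prime, \(V\) maps either to \(\varnothing\) (when \(n\) is not prime) or to \(V\setminus V=\varnothing\) (when \(n\) is prime). Either way \(V\) reaches a fixed point within one step. Hence any set of size \(3\) has transient at most \(2\).

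\emph{(d)} If \(m\) is prime with \(m\ne 3\), then \(S\mapsto V\setminus S\), of size \(n-m\). I reapply the classification to \(V\setminus S\):
\begin{itemize}
\item If \(n-m\) falls into case (a), then \(V\setminus S\) is fixed and \(t\le1\).
\item If it falls into case (b), which includes \(n-m=0\), then \(t\le 2\).
\item If \(n-m\) is prime and \(\ne 3\), the classification theorem says \(S\leftrightarrow V\setminus S\) is a complement two-cycle, so \(t=0\).
\item If \(n-m=3\), case (c) applies to \(V\setminus S\), giving \(t\le 1+2=3\).
\end{itemize}

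Combining the cases gives \(t(S_0)\le3\) for every \(S_0\), hence \(T(n)\le 3\) uniformly in \(n\).

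The main obstacle is bookkeeping rather than any deep step. One must make sure that case (d) cannot chain further, for example complement, then \(V\), then something non-fixed. The key observation that closes the chain is that \(V\) always reaches a fixed point (\(V\) or \(\varnothing\)) in at most one step. The edge cases \(m=0\), \(m=n\) and small \(n\) should also be checked against the formula, since there \(V\setminus S\) or \(S\) is empty.
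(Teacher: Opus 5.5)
Your proof is correct and is essentially the paper's proof: both run a short case analysis on the four possible outputs \(\varnothing, S, V\setminus S, V\) of the complete graph update formula, and both close it with the fact that \(V\) is either fixed or maps directly to the fixed point \(\varnothing\). Splitting by the primality of \(m-1\) and \(m\), instead of by the value of \(S_1\) as the paper does, is only a reorganization, though it usefully shows that a set \(S\neq V\) maps to \(V\) only when \(|S|=3\), so a three-step transient can arise only through \(S_0\mapsto V\setminus S_0\mapsto V\mapsto\varnothing\) with \(n-|S_0|=3\).
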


\begin{proof}
Let \(S_0\subseteq V\) be arbitrary. By the complete graph update formula,
\[
S_1\in\{\varnothing,S_0,V\setminus S_0,V\}.
\]

We consider the four possibilities.

\emph{Case 1: \(S_1=\varnothing\).}
The empty state is fixed, since every vertex sees \(0\) active in-neighbors and \(0\) is not prime. Hence the orbit has entered an attractor at time \(1\).

\emph{Case 2: \(S_1=S_0\).}
Then \(S_0\) is a fixed point. Hence the orbit was already on an attractor at time \(0\).

\emph{Case 3: \(S_1=V\).}
If \(n-1\) is prime, then \(V\) is fixed, so the orbit enters an attractor at time \(1\). If \(n-1\) is not prime, then \(V\mapsto\varnothing\), and \(\varnothing\) is fixed. Hence the orbit enters an attractor by time \(2\).

\emph{Case 4: \(S_1=V\setminus S_0\).}
Apply the complete graph update formula again:
\[
S_2\in\{\varnothing,V\setminus S_0,S_0,V\}.
\]

If \(S_2=\varnothing\), then the orbit enters the fixed point \(\varnothing\) at time \(2\).

If \(S_2=V\setminus S_0\), then \(V\setminus S_0\) is fixed, so the orbit enters an attractor at time \(1\).

If \(S_2=S_0\), then
\[
S_0\longmapsto V\setminus S_0\longmapsto S_0,
\]
so the orbit is on a two-cycle from time \(0\).

If \(S_2=V\), then either \(V\) is fixed or \(V\mapsto\varnothing\). Therefore the orbit enters an attractor by time \(3\).

Thus every orbit enters a fixed point or a two-cycle after at most three updates.
\end{proof}

\subsection{Counting Attractors in Complete Pulse Graphs}

The classification theorem also gives explicit formulas for the number of attractors in \(K_n^\rightarrow\).

Let
\[
F(n)=\text{number of fixed points of }K_n^\rightarrow,
\]
\[
C(n)=\text{number of complement two-cycles of }K_n^\rightarrow,
\]
and
\[
A(n)=\text{total number of attractors of }K_n^\rightarrow.
\]

\begin{proposition}[Fixed-point count]
For the complete directed loopless Pulse Graph \(K_n^\rightarrow\),
\[
F(n)
=
1
+
\begin{cases}
1, & \text{if } n-1 \text{ is prime},\\
0, & \text{otherwise},
\end{cases}
+
\sum_{\substack{1\leq m\leq n-1\\
m-1\text{ is prime}\\
m\text{ is not prime}}}
\binom{n}{m}.
\]

\end{proposition}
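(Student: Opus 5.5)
The plan is to characterize fixed points directly through the complete graph update formula and then count them by the size of the active set. A state \(S\) with \(|S|=m\) is fixed exactly when \(S_{t+1}=S_t\). By the update formula, the image of \(S\) is one of \(V\), \(S\), \(V\setminus S\), or \(\varnothing\), depending only on the primality of \(m-1\) and of \(m\). So whether \(S\) is fixed depends only on \(m\), and the count splits as a sum over \(m=0,\dots,n\). Each admissible size \(m\) contributes \(\binom{n}{m}\).

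Next, treat the extreme sizes separately, since for them two of the four images coincide.

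\begin{itemize}
\item For \(m=0\), the empty set is always fixed, because \(0\) is not prime. This gives the leading \(1\).
\item For \(m=n\), the state \(V\) is fixed exactly when \(n-1\) is prime. If \(n\) is also prime, the image is \(V\) anyway. If \(n-1\) is not prime, the image is \(\varnothing\) or \(V\setminus S=\varnothing\), neither of which is fixed. This gives the indicator term.
\end{itemize}

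This is the same classification recorded in the proof of the attractor classification theorem, so those two facts can be cited directly.

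For \(1\le m\le n-1\), the set \(S\) is a proper nonempty subset. Its four candidate images \(V\), \(S\), \(V\setminus S\), \(\varnothing\) are then pairwise distinct, because \(S\neq\varnothing,V\), and \(V\setminus S\) is also neither empty nor all of \(V\) nor equal to \(S\). Hence \(S\) is fixed if and only if the update formula lands in the \(S_t\) case, which means \(m-1\) is prime and \(m\) is not prime. Summing \(\binom{n}{m}\) over such \(m\) gives the final term.

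The only point needing care is this distinctness check. Without it, one might worry that for a proper subset the \(V\) or complement case could accidentally coincide with \(S\). The point is that \(S=V\setminus S\) is impossible for a nonempty \(S\). Once this is settled, adding the three disjoint contributions (sizes \(0\), \(n\), and \(1\) through \(n-1\)) yields the formula for \(F(n)\). I expect no real obstacle beyond this bookkeeping. The one mildly delicate case is \(n\le 2\), where the sums are empty and the checks degenerate, but the argument still applies verbatim.
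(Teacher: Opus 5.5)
Your proof is correct and follows the paper's argument: both split the count into the empty set, the full set, and proper nonempty sets of each size \(m\). Your explicit check that the four candidate images are distinct for proper \(S\) just spells out a step the paper leaves implicit. One small wording fix: when \(n-1\) is not prime, the point is that the image \(\varnothing\) differs from \(V\), not that \(\varnothing\) fails to be fixed, since \(\varnothing\) is always fixed.
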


\begin{proof}
The empty state \(\varnothing\) is always fixed, giving the first term \(1\).

The full state \(V\) contributes
\[
\begin{cases}
1, & \text{if } n-1 \text{ is prime},\\
0, & \text{otherwise}.
\end{cases}
\]

For a proper nonempty active set \(S\), let \(m=|S|\). Such a set is fixed exactly when
\[
m-1 \text{ is prime}
\qquad\text{and}\qquad
m \text{ is not prime}.
\]
For each allowed size \(m\), there are \(\binom{n}{m}\) choices of \(S\). Summing over all such \(m\) gives the formula.
\end{proof}

\begin{proposition}[Two-cycle count]
The number of complement two-cycles in \(K_n^\rightarrow\) is
\[
C(n)
=
\frac{1}{2}
\sum_{\substack{0\leq m\leq n\\
m\text{ prime}\\
n-m\text{ prime}\\
m\neq 3,\ n-m\neq 3}}
\binom{n}{m}.
\]

\end{proposition}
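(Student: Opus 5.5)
The formula follows from the characterization of complement two-cycles in the attractor classification theorem for \(K_n^\rightarrow\). We count the ordered pairs \((S,V\setminus S)\) that lie on such cycles, and then divide by two because each cycle is an unordered pair of states.

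First, recall the characterization. From the complete graph update formula, \(S\mapsto V\setminus S\) holds exactly when inactive vertices turn on and active vertices turn off. This means \(|S|\) is prime and \(|S|-1\) is not prime. For prime \(|S|\), the second condition fails only when \(|S|=3\). So \(S\) maps to its complement if and only if \(|S|\) is prime and \(|S|\neq 3\).

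The pair \(\{S,V\setminus S\}\) is a two-cycle exactly when both \(S\mapsto V\setminus S\) and \(V\setminus S\mapsto S\). Writing \(m=|S|\), this is the condition
\[
m \text{ prime},\quad n-m \text{ prime},\quad m\neq 3,\quad n-m\neq 3.
\]
We also check that the cycle genuinely has period two, i.e. \(S\neq V\setminus S\). This is automatic: primality forces \(m\geq 2\) and \(n-m\geq 2\), so \(S\) is a proper nonempty subset, and \(S\neq V\setminus S\) always holds for \(n\geq 1\). Note too that \(m=0\) and \(m=n\) are excluded because \(0\) is not prime, so the range \(0\leq m\leq n\) in the sum is harmless.

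Next, count. The number of subsets \(S\) satisfying the condition is \(\sum \binom{n}{m}\) over the admissible \(m\). The admissible set of sizes is invariant under \(m\mapsto n-m\), and complementation \(S\mapsto V\setminus S\) is a fixed-point-free involution on these subsets. Each two-cycle therefore corresponds to exactly two of the counted subsets, which gives the factor \(\tfrac12\). The sum is even because the involution has no fixed points, so \(C(n)\) is an integer.

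The only delicate point is this bookkeeping: making sure each cycle is counted exactly twice, and that no degenerate case (such as \(m=n/2\)) produces a fixed point of the involution. The argument above shows none does, since \(S\neq V\setminus S\) for every subset. The rest is a direct reading of the classification theorem.
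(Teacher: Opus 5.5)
Your proposal is correct and follows essentially the same route as the paper's proof: you read the two-cycle condition off the classification theorem, count the admissible subsets by size using \(\binom{n}{m}\), and halve because each cycle \(\{S,V\setminus S\}\) is counted once from each side. Your explicit check that complementation has no fixed points is a small addition the paper leaves implicit. One minor point: your intermediate claim that \(S\mapsto V\setminus S\) holds iff \(|S|\) is prime and \(|S|\neq 3\) is exact only for proper nonempty \(S\). For example, \(V\mapsto\varnothing\) whenever \(n-1\) is not prime, even when \(n\) is composite. This does not affect the count, because \(\varnothing\) is fixed and so such pairs never form two-cycles.
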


\begin{proof}
A complement two-cycle has the form
\[
S\longleftrightarrow V\setminus S.
\]
By the attractor classification theorem, this occurs exactly when \(|S|\) and \(|V\setminus S|\) are both prime, and neither size equals \(3\).

If \(|S|=m\), then \(|V\setminus S|=n-m\). There are \(\binom{n}{m}\) choices for \(S\). The restrictions in the sum enforce the conditions that \(m\) and \(n-m\) are prime and neither equals \(3\).

Finally, each two-cycle is counted twice: once by \(S\), and once by \(V\setminus S\). Therefore we divide by \(2\).
\end{proof}

\begin{corollary}[Total attractor count]
The total number of attractors of \(K_n^\rightarrow\) is
\[
A(n)=F(n)+C(n).
\]
\end{corollary}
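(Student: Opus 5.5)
The plan is to show that fixed points and complement two-cycles exhaust all attractors of \(K_n^\rightarrow\), and that these two families are disjoint. Then \(A(n)\) is simply the sum of the two counts \(F(n)\) and \(C(n)\) given by the preceding propositions.

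\emph{Exhaustiveness.} By the attractor classification theorem, every orbit of \(K_n^\rightarrow\) eventually enters either a fixed point or a complement two-cycle \(S\longleftrightarrow V\setminus S\). An attractor is itself a periodic orbit, so it is its own eventual attractor. Hence every attractor is either a fixed point or a complement two-cycle. Conversely, every fixed point is an attractor of period \(1\), and every complement two-cycle is an attractor of period \(2\).

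\emph{Correctness of the two-cycle count.} The genuine two-cycles must satisfy \(S\neq V\setminus S\). This always holds, since \(S=V\setminus S\) would require \(S=\varnothing=V\), which is impossible for \(n\ge 1\). So each two-cycle consists of two distinct states and is counted exactly once by \(C(n)\), through the division by \(2\) in the two-cycle count proposition. The admissible sizes \(m\) are primes with \(n-m\) prime, so \(m\ge2\) and \(n-m\ge2\). In particular the sets involved are never \(\varnothing\) or \(V\), which confirms they are real two-cycles rather than degenerate ones.

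\emph{Disjointness.} An attractor of period \(1\) and one of period \(2\) are different orbits, since their period counts differ. So no attractor is counted in both \(F(n)\) and \(C(n)\), and summing the two counts gives \(A(n)=F(n)+C(n)\).

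The only mild obstacle is confirming that the fixed-point and two-cycle conditions cannot overlap. This is settled by the periods alone. One can also check it directly: a fixed \(S\) requires \(|S|-1\) prime, while a two-cycle requires \(|S|-1\) not prime.
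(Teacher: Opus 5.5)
Your proposal is correct and is essentially the paper's argument. The paper treats the corollary as immediate from the attractor classification theorem, and you make explicit that every attractor is a fixed point or a complement two-cycle and that the two families are disjoint because their periods differ; the one slip is in your optional direct check, since $\varnothing$ is fixed even though $|\varnothing|-1=-1$ is not prime, so that check holds only for nonempty $S$, which is all you need because two-cycle sets have size at least $2$.
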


The first 10 values are:
\[
\begin{array}{c|cccccccccc}
n & 1&2&3&4&5&6&7&8&9&10\\
\hline
F(n) & 1&1&2&2&6&17&43&100&220&466\\
C(n) & 0&0&0&3&0&0&21&0&36&126\\
A(n) & 1&1&2&5&6&17&64&100&256&592
\end{array}
\]

\section{Local Activation Probability and Mean-Field Heuristic}

\subsection{Local Activation Probability}
For a vertex of indegree \(d\), suppose each incoming neighbor is independently active with probability \(\rho\). Then the number of active in-neighbors is binomial:
\[
X\sim \operatorname{Binomial}(d,\rho).
\]
Hence the probability that the vertex turns on is
\[
P_d(\rho)
=
\mathbb{P}(X\text{ is prime})
=
\sum_{\substack{0\leq k\leq d\\ k\text{ prime}}}
\binom{d}{k}\rho^k(1-\rho)^{d-k}.
\]
In the uniform case \(\rho=1/2\),
\[
P_d(1/2)
=
\frac{1}{2^d}
\sum_{\substack{0\leq k\leq d\\ k\text{ prime}}}
\binom{d}{k}.
\]
For example,
\[
P_1(1/2)=0,\qquad
P_2(1/2)=\frac14,\qquad
P_3(1/2)=\frac12,\qquad
P_4(1/2)=\frac58.
\]

\subsection{Sparse Random Pulse Graph Mean-field Heuristic}

Let \(G\) be a random directed graph on \(n\) vertices with no self-loops. Each possible directed edge appears independently with probability \(c/n\). Since there are \(n-1\) possible incoming edges to each vertex, the expected indegree of one vertex is
\[
(n-1)\frac cn \approx c.
\]
Thus \(c\) represents the average indegree.

Let \( \rho_t \) be the fraction of active vertices at time \(t\). For example, \(\rho_t=0.25\) means that about \(25\%\) of the vertices are active.

Fix a typical vertex \(v\). A different vertex \(u\) gives an active input to \(v\) when \(u\) is active and the edge \(u\to v\) exists. This has probability approximately 
\[
\rho_t\frac cn.
\]
Since there are about \(n-1\) possible choices of \(u\), the active in-neighbor count is approximately
\[
\operatorname{Binomial}\left(n-1,\frac{c\rho_t}{n}\right).
\]
For large \(n\), this is approximately 
\[
\operatorname{Poisson}(c\rho_t).
\]

Define \(
\lambda=c\rho_t.
\) Here \(\lambda\) is the expected number of active in-neighbors of a typical vertex.

Since a vertex turns on exactly when its number of active in-neighbors is prime, the next active density is approximately
\[
\rho_{t+1}=Q(c\rho_t),
\]
where
\[
Q(\lambda)
=
\mathbb{P}(\operatorname{Poisson}(\lambda)\text{ is prime})
=
e^{-\lambda}\sum_{p\ \mathrm{prime}}\frac{\lambda^p}{p!}.
\]

Near \(\rho=0\), the first prime contribution is \(p=2\), so
\[
Q(c\rho)
\sim
\frac{(c\rho)^2}{2}.
\]
Therefore very small activity tends to shrink rather than grow. In the mean-field model, the all-zero state is locally stable for every finite \(c\).

A nonzero fixed active density satisfies
\[
\rho=Q(c\rho).
\]
Writing
\[
\lambda=c\rho,
\]
we get
\[
\rho=Q(\lambda),
\qquad
c=\frac{\lambda}{Q(\lambda)}.
\]

\begin{theorem}[Mean-field threshold for positive fixed points]
\label{thm:mean-field-threshold}

Define
\[
\Phi(\lambda)=\sum_{p\ \mathrm{prime}}\frac{\lambda^p}{p!},
\qquad
Q(\lambda)=e^{-\lambda}\Phi(\lambda),
\qquad
c(\lambda)=\frac{\lambda}{Q(\lambda)}
\quad (\lambda>0).
\]
Then \(c(\lambda)\) has a unique global minimizer
\[
\lambda_\ast\approx 1.408509.
\]
At this point,
\[
\rho_\ast=Q(\lambda_\ast)
\approx 0.368241,
\qquad
c_\ast=c(\lambda_\ast)
\approx 3.824963.
\]
Equivalently, \(c_\ast\) is the global threshold for the existence of a positive solution of the mean-field fixed-point equation
\[
\rho=Q(c\rho).
\]
The minimizer is unique, although \(c(\lambda)\) may have additional
critical points for larger values of \(\lambda\).
\end{theorem}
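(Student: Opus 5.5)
The plan is to study \(c(\lambda)=\lambda e^{\lambda}/\Phi(\lambda)\) on \((0,\infty)\) directly. I will show it is continuous and positive, tends to \(+\infty\) at both ends of the interval, and has only one critical point where its global minimum can sit.

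\textbf{Step 1: behavior at the ends.} As \(\lambda\to0^+\), \(\Phi(\lambda)\sim\lambda^2/2\), so \(c(\lambda)\sim 2/\lambda\to\infty\). As \(\lambda\to\infty\), \(Q(\lambda)\le1\), so \(c(\lambda)\ge\lambda\to\infty\). Hence a global minimizer exists.

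\textbf{Step 2: the fixed-point equivalence.} For each \(c>0\), positive solutions of \(\rho=Q(c\rho)\) correspond bijectively, via \(\lambda=c\rho\), to solutions \(\lambda>0\) of \(c=c(\lambda)\). So such a solution exists iff \(c\ge\min c\), which identifies \(c_\ast\) as the threshold.

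\textbf{Step 3: the critical-point equation.} Differentiating \(\log c(\lambda)=\log\lambda+\lambda-\log\Phi(\lambda)\) shows that critical points are the zeros of
\[
g(\lambda)=\frac1\lambda+1-\frac{\Phi'(\lambda)}{\Phi(\lambda)}.
\]

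\textbf{Step 4: uniqueness of the minimizer (the main obstacle).} The difficulty is that \(\Phi'/\Phi\) is not globally monotone in a usable way, and the statement itself admits further critical points at larger \(\lambda\). So I would not try to prove that \(g\) has a single zero. Instead I would argue in three parts.

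First, on a compact window \([\lambda_1,\lambda_2]\) around \(1.4085\), say \([1,2]\), show that \(g\) changes sign exactly once. A simple route is to prove \(g'<0\) there: truncate \(\Phi\) after finitely many primes and bound the tail explicitly, or use interval arithmetic. This gives a strict local minimum, which a bisection locates at \(\lambda_\ast\approx1.408509\). Evaluating \(Q\) and \(c\) there then gives \(\rho_\ast\approx0.368241\) and \(c_\ast\approx3.824963\), again with certified tail bounds.

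Second, on \((0,\lambda_1]\) show \(c(\lambda)>c(\lambda_\ast)\). The bound \(\Phi(\lambda)\le\lambda^2/2+\lambda^3/6+\lambda^5e^{\lambda}/120\), or simply monotone decrease of \(c\) there, gives this; near \(0\) the dominant term \(2/\lambda\) settles it.

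Third, on \([\lambda_2,\infty)\) show \(c(\lambda)>c_\ast+\delta\) for some \(\delta>0\). For \(\lambda\ge\Lambda\) (with \(\Lambda\) around \(4\)), the crude bound \(c\ge\lambda\) already suffices. On the compact piece \([\lambda_2,\Lambda]\), I would use a certified grid evaluation with Lipschitz bounds on \(c\) to show \(c\) stays above \(c_\ast\) by a margin. This is where any additional critical points live, and the argument only needs them to have larger values.

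These three parts show that the global minimum is attained only at \(\lambda_\ast\). The computer-assisted compact-interval verification is the main obstacle; the rest is elementary.
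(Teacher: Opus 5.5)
\textbf{There is a sign error in the first part of your Step~4, and as written that step cannot be carried out.} Your $g=(\log c)'$ has the same sign as $c'$. Since $c\sim 2/\lambda$ is decreasing near $0$, $g$ must cross from negative to positive at the minimizer. So what you need on $[1,2]$ is $g'>0$, not $g'<0$.

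The inequality you propose to verify (by truncation or interval arithmetic) is false: $g(1.3)\approx-0.057$ and $g(1.5)\approx0.041$, so $g'\approx0.49$ near $\lambda_\ast$. If $g'<0$ held with a single sign change, that point would be a local \emph{maximum} of $c$, contradicting the next sentence of your plan.

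The repair is short. Write $\Phi(\lambda)=\lambda^2u(\lambda)$ with $u(\lambda)=\tfrac12+\tfrac{\lambda}{6}+\tfrac{\lambda^3}{120}+\tfrac{\lambda^5}{5040}+\cdots$. Then $g=1-\tfrac1\lambda-\tfrac{u'}{u}$, and since $u$ and $u''$ are increasing,
\[
g'(\lambda)=\frac{1}{\lambda^2}-\frac{u''}{u}+\Bigl(\frac{u'}{u}\Bigr)^2\ \ge\ \frac14-\frac{u''(2)}{u(1)}\ \ge\ \frac14-\frac{0.133}{2/3}>0\qquad(1\le\lambda\le2).
\]
Together with $g(1)=-u'(1)/u(1)<0$ and $g(2)\approx0.19>0$, this gives exactly one sign change, from negative to positive. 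The same formula gives $g<0$ on $(0,1]$ at once, since $1-1/\lambda\le0$ and $u'/u>0$; this handles your second part.

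\textbf{With that correction your argument works, but it differs from the paper's on $[2,\infty)$.} The paper uses $\Omega(\lambda)=\lambda\Phi'-(1+\lambda)\Phi=\sum_p(p-1-\lambda)\lambda^p/p!$, which equals $-\lambda\Phi(\lambda)g(\lambda)$.
\begin{itemize}
\item It gets $\Omega>0$ on $(0,1]$ termwise.
\item It proves $\Omega'<0$ on all of $[1,5]$ by comparing with the full exponential series, which reduces to a quintic with $H(\lambda)\le-5$.
\item Then $\Omega(1)>0>7-e^2\ge\Omega(2)$ gives the unique zero in $(1,2)$, and $c$ is strictly increasing on $(\lambda_\ast,5]$.
\item Finally, $\lambda\ge5$ is excluded by $c(\lambda)>\lambda\ge5>3e/2>c(1)>c(\lambda_\ast)$.
\end{itemize}
So the paper's uniqueness proof is entirely by hand; only the decimals of $\lambda_\ast,\rho_\ast,c_\ast$ are computed numerically. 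Your route replaces the estimate on $[2,5]$ by a certified grid check. That is legitimate, but with $\Lambda\approx4$ you also need a certified bound $c_\ast<4$. The cheap bound $c(1)<3e/2\approx4.08$ does not suffice, whereas the paper's cutoff at $5$ avoids the issue.

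Finally, your remark that additional critical points live in $[2,\Lambda]$ is inaccurate. The paper's argument shows there are none in $(\lambda_\ast,5]$, so any others lie beyond $5$. This does not affect your proof.
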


\begin{proof}[Proof sketch]
By the preceding parameterization, the threshold is the global minimum of \(c(\lambda)\). Differentiation shows that its critical points satisfy
\[
\Omega(\lambda)=
\lambda\Phi'(\lambda)-(1+\lambda)\Phi(\lambda)=0.
\]
One proves that \(\Omega\) has a unique zero in \((1,2)\), that this zero gives the global minimum, and that no \(\lambda\geq5\) gives a smaller value because \(c(\lambda)>\lambda\). The numerical values follow from solving the critical-point equation. Full details are given in Appendix~\ref{app:mean-field}.
\end{proof}

\begin{remark}
The theorem proves the global minimum of \(c(\lambda)\), not that \(\lambda_\ast\) is the only critical point on \((0,\infty)\). Additional critical points at larger \(\lambda\) do not affect the global threshold for positive fixed points of the mean-field map, because \(c(\lambda)\geq\lambda\).
\end{remark}

Since
\[
c(\lambda)\to\infty
\quad\text{as }\lambda\downarrow0
\qquad\text{and}\qquad
c(\lambda)\to\infty
\quad\text{as }\lambda\to\infty,
\]
continuity and the uniqueness of the global minimum imply that the mean-field fixed-point equation has no positive solution when \(c<c_\ast\), exactly one positive solution when \(c=c_\ast\), and at least two positive solutions when \(c>c_\ast\). The stability of these positive fixed points is not determined by the existence argument alone.

\begin{proposition}[Local bistability above the threshold]
\label{prop:mean-field-bistability}

Let
\[
f_c(\rho)=Q(c\rho).
\]
There exists \(\varepsilon>0\) such that, for every
\[
c_\ast<c<c_\ast+\varepsilon,
\]
the map \(f_c\) has two positive fixed points near \(\rho_\ast\). The smaller branch is unstable, while the larger branch is locally asymptotically stable. Since \(\rho=0\) is also locally asymptotically stable, the mean-field map is bistable in this parameter range.
\end{proposition}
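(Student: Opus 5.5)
The plan is to treat this as a nondegenerate saddle-node (fold) problem for the one-dimensional family \(f_c(\rho)=Q(c\rho)\), using the parameterization from Theorem~\ref{thm:mean-field-threshold}. Positive fixed points are the points \((\lambda, c(\lambda))\) with \(\rho=Q(\lambda)=\lambda/c\). At a fixed point, \(f_c'(\rho)=cQ'(\lambda)\). Substituting \(c=\lambda/Q(\lambda)\) gives the multiplier
\[
\mu(\lambda)=\frac{\lambda Q'(\lambda)}{Q(\lambda)},
\]
which depends only on \(\lambda\). A direct computation relates it to \(c'(\lambda)\):
\[
c'(\lambda)=\frac{Q(\lambda)-\lambda Q'(\lambda)}{Q(\lambda)^2}=\frac{1-\mu(\lambda)}{Q(\lambda)}.
\]
So \(\mu<1\) exactly where \(c\) is increasing, and \(\mu>1\) exactly where \(c\) is decreasing. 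At the minimizer \(\lambda_\ast\) we have \(\mu(\lambda_\ast)=1\), which is the fold.

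The steps run as follows.

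\emph{Step 1: nondegeneracy.} I need \(c''(\lambda_\ast)>0\), equivalently \(\mu'(\lambda_\ast)<0\). Using the critical-point relation \(\Omega(\lambda_\ast)=0\) from the appendix, I would express \(c''(\lambda_\ast)\) through \(\Phi,\Phi',\Phi''\) at \(\lambda_\ast\approx1.4085\). I would then bound it below by truncating the prime series after a few terms, \(p\le 13\) say, with an explicit tail estimate. This gives a rigorous positive lower bound. I expect this to be the main obstacle. The value is only numerically known, so the proof needs interval-type estimates. The uniqueness and isolation of the minimizer from Theorem~\ref{thm:mean-field-threshold} help here, but strict positivity of the second derivative is extra information that must be checked separately.

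\emph{Step 2: two branches near \(\rho_\ast\).} Since \(c\) is smooth with \(c'(\lambda_\ast)=0\) and \(c''(\lambda_\ast)>0\), \(c\) is strictly decreasing on \((\lambda_\ast-\delta,\lambda_\ast)\) and strictly increasing on \((\lambda_\ast,\lambda_\ast+\delta)\). Set \(\varepsilon=\min\{c(\lambda_\ast\pm\delta)\}-c_\ast\). Then for \(c_\ast<c<c_\ast+\varepsilon\) there are exactly two solutions of \(c(\lambda)=c\) in this window, \(\lambda_-<\lambda_\ast<\lambda_+\). The map \(\lambda\mapsto\rho=\lambda/c\) is monotone for fixed \(c\), so \(\rho_-<\rho_+\), and both are near \(\rho_\ast\).

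\emph{Step 3: stability.} On the smaller branch, \(c'(\lambda_-)<0\), so \(\mu(\lambda_-)>1\) and the fixed point is unstable. On the larger branch, \(c'(\lambda_+)>0\), so \(\mu(\lambda_+)<1\). For local asymptotic stability I also need \(\mu>-1\). This holds because \(\mu\) is continuous with \(\mu(\lambda_\ast)=1\), so shrinking \(\delta\) keeps \(\mu(\lambda_+)\in(0,1)\). The strict inequalities follow from the signs of \(c'\) once \(c''(\lambda_\ast)>0\) is established. Step 1 is needed here as well: it ensures \(c'\) is nonzero off \(\lambda_\ast\) within the window.

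\emph{Step 4: stability of zero.} As in the heuristic preceding Theorem~\ref{thm:mean-field-threshold}, \(Q(\lambda)=O(\lambda^2)\) as \(\lambda\to0\). Hence \(f_c(0)=0\) and \(f_c'(0)=cQ'(0)=0\), so \(\rho=0\) is locally asymptotically stable for every \(c\). Combining this with Step 3 gives bistability.
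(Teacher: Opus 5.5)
Your Steps 2--4 match the paper's proof. You use the same parameterization \(\rho=Q(\lambda)\), \(c=c(\lambda)\) and the same multiplier \(\mu=\lambda Q'/Q\). Your identity \(1-\mu=Q\,c'\) is the paper's \(f_c'(\rho)=1+\Omega/\Phi\), because \(c'=-e^{\lambda}\Phi^{-2}\Omega\) and \(Q=e^{-\lambda}\Phi\). You also use the same continuity argument for \(\mu>-1\) on the upper branch and the same observation \(f_c'(0)=0\).

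The real difference is Step 1, where your route is much harder than it needs to be. You plan to certify \(c''(\lambda_\ast)>0\) by interval estimates on truncated prime series at a numerically known \(\lambda_\ast\). That would also need a rigorous enclosure of \(\lambda_\ast\), for example sign checks of \(\Omega\) at two rational points, followed by a uniform bound on \(c''\) over that interval. The paper gets this in one line. Differentiating \(c'=-e^{\lambda}\Phi^{-2}\Omega\) and using \(\Omega(\lambda_\ast)=0\) gives \(c''(\lambda_\ast)=-e^{\lambda_\ast}\Phi(\lambda_\ast)^{-2}\Omega'(\lambda_\ast)\). The proof of Theorem~\ref{thm:mean-field-threshold} already shows \(\Omega'(\lambda)\le\lambda H(\lambda)/120<0\) on all of \([1,5]\), which contains \(\lambda_\ast\in(1,2)\). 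So the ``extra information'' you say must be checked separately is exactly what that proof establishes: strict negativity of \(\Omega'\), not just uniqueness of the zero.

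Contrary to your remark in Step 3, nondegeneracy is not needed for the proposition as stated. The same appendix gives \(\Omega>0\) on \((0,\lambda_\ast)\) and \(\Omega<0\) on \((\lambda_\ast,5]\). That directly yields the two branches and the strict inequalities \(\mu(\lambda_-)>1>\mu(\lambda_+)\). Even from the theorem's statement alone, \(c\) is real-analytic on \((0,\infty)\), so the zeros of \(c'\) are isolated. A strict minimizer then forces \(c'<0\) just to the left of \(\lambda_\ast\) and \(c'>0\) just to the right.

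Your argument would be a valid proof once the interval computation is done, but that computation gains nothing over the one-line deduction.
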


\begin{proof}[Proof sketch]
At the threshold, the positive fixed point has multiplier \(1\), and the parameterization \(c=c(\lambda)\) has a nondegenerate local minimum. Consequently, two nearby positive branches appear for \(c>c_\ast\). The multiplier can be written as
\[
f_c'(\rho)
=
1+\frac{\Omega(\lambda)}{\Phi(\lambda)}.
\]
Its sign relative to \(1\) shows that the lower branch is unstable and the upper branch is locally stable. The complete argument appears in Appendix~\ref{app:mean-field}.
\end{proof}

\section{Conclusion}

The main result shows that a single arithmetic totalistic rule supports attractors of exponential length on sparse graphs of bounded in-degree. Complete directed graphs, by contrast, collapse to fixed points and complement two-cycles with uniformly bounded transient time.

Several open problems remain. One is to narrow the gap between
\[
2^{n-3}-1 \leq L(n)\leq 2^n-n-1.
\]
Other questions include determining the exact values of \(L(n)\) for \(n\geq6\), describing the full set of periods realizable at fixed \(n\), and identifying graph structures that force either short or long attractors. On the probabilistic side, an important next step is to prove a finite-time limit theorem for the prime--Poisson recursion, and determine whether its predicted bistability persists on long time scales in large sparse random Pulse Graphs.

\section*{Acknowledgments}

I thank Professor Thotsaporn Thanatipanonda for helpful comments on an earlier draft.

\appendix

\section{Details of the Mean-Field Analysis}
\label{app:mean-field}

\subsection{Proof of the mean-field threshold theorem}

\begin{proof}[Proof of Theorem~\ref{thm:mean-field-threshold}]
Since
\[
c(\lambda)=\frac{\lambda e^\lambda}{\Phi(\lambda)},
\]
differentiation gives
\[
\begin{aligned}
c'(\lambda)
&=
\frac{e^\lambda(1+\lambda)\Phi(\lambda)
      -\lambda e^\lambda \Phi'(\lambda)}
     {\Phi(\lambda)^2} \\
&=
-\frac{e^\lambda}{\Phi(\lambda)^2}
\left[
\lambda \Phi'(\lambda)-(1+\lambda)\Phi(\lambda)
\right].
\end{aligned}
\]
Define
\[
\begin{aligned}
\Omega(\lambda)
&=
\lambda \Phi'(\lambda)-(1+\lambda)\Phi(\lambda) \\
&=
\sum_{p\ \mathrm{prime}}
\lambda \, p\frac{\lambda^{p-1}}{p!}
-
\sum_{p\ \mathrm{prime}}
(1+\lambda)\frac{\lambda^p}{p!}\\
&=
\sum_{p\ \mathrm{prime}}
(p-1-\lambda)\frac{\lambda^p}{p!}.
\end{aligned}
\]
Then
\[
c'(\lambda)
=
-\frac{e^\lambda}{\Phi(\lambda)^2}\Omega(\lambda).
\]
Thus, \(c'(\lambda)\) has the opposite sign from \(\Omega(\lambda)\).

For \(0<\lambda\leq1\), every summand of \(\Omega(\lambda)\) is nonnegative, and the \(p=3\) summand is strictly positive. Hence
\[
\Omega(\lambda)>0
\qquad (0<\lambda\leq1),
\]
so \(c'(\lambda) < 0\) and \(c\) is strictly decreasing there.

We next prove that \(\Omega\) is strictly decreasing on \([1,5]\). For an
integer \(k\geq1\), define
\[
g_k(\lambda)
=
\left(k-1-\lambda-\frac{\lambda}{k}\right)
\frac{\lambda^{k-1}}{(k-1)!}.
\]
Then
\[
\Omega'(\lambda)=\sum_{p\ \mathrm{prime}}g_p(\lambda).
\]
For \(1\leq\lambda\leq5\) and \(k\geq7\),
\[
k-1-\lambda-\frac{\lambda}{k}
\geq
k-6-\frac{5}{k}
\geq
\frac{2}{7}>0.
\]
Therefore \(g_k(\lambda)\geq0\) for all \(k\geq7\), and hence
\[
\Omega'(\lambda)
\leq
g_2(\lambda)+g_3(\lambda)+g_5(\lambda)
+\sum_{k=7}^{\infty}g_k(\lambda).
\]
If all positive integers are included instead of only primes, then
\[
\sum_{k=1}^{\infty}g_k(\lambda)=1-e^\lambda.
\]
Consequently,
\[
\Omega'(\lambda)
\leq
1-e^\lambda+2\lambda-g_4(\lambda)-g_6(\lambda).
\]
Using
\[
e^\lambda\geq
\sum_{j=0}^{6}\frac{\lambda^j}{j!},
\]
we obtain
\[
\Omega'(\lambda)
\leq
\frac{\lambda}{120}H(\lambda),
\]
where
\[
H(\lambda)
=
\lambda^5-6\lambda^4+20\lambda^3
-80\lambda^2-60\lambda+120.
\]

Now we show \(H(\lambda)<0\) on \([1,5]\). For \(1\leq\lambda\leq5\), we have
\[ 
\lambda^2-6\lambda+20=
(\lambda-3)^2+11
\leq15.
\]
Therefore
\[
\begin{aligned}
H(\lambda)
&=
\lambda^3(\lambda^2-6\lambda+20)
-80\lambda^2-60\lambda+120\\
&\leq
15\lambda^3-80\lambda^2-60\lambda+120\\
&=
-5+(\lambda-1)(15\lambda^2-65\lambda-125).
\end{aligned}
\]
Since \(0<\lambda\leq5\), we have \(\lambda^2\leq5\lambda\), and hence
\[
15\lambda^2-65\lambda-125
\leq
75\lambda-65\lambda-125=
10\lambda-125
\leq
-75<0.
\]
Also, \(\lambda-1\geq0\). Consequently,
\[
(\lambda-1)(15\lambda^2-65\lambda-125)\leq0,
\]
so
\[
H(\lambda)\leq-5<0
\qquad(1\leq\lambda\leq5).
\]
Therefore
\[
\Omega'(\lambda)<0
\qquad(1\leq\lambda\leq5).
\]

We have \(\Omega(1)>0\). Also,
\[
\Omega(2)
=
-2+
\sum_{\substack{p\geq5\\p\ \mathrm{prime}}}
(p-3)\frac{2^p}{p!}.
\]
All terms in the remaining sum are nonnegative, so enlarging the sum to all integers \(k\geq4\) gives
\[
\Omega(2)
\leq
-2+\sum_{k=4}^{\infty}(k-3)\frac{2^k}{k!}
=
7-e^2<0.
\]
Hence, because \(\Omega\) is strictly decreasing on \([1,5]\), it has a unique zero
\[
\lambda_\ast\in(1,2).
\]
This shows that \(c\) is strictly decreasing on \((0,\lambda_\ast)\) and strictly increasing on \((\lambda_\ast,5]\).

Finally, since \(Q(\lambda)\) is a probability,
\[
0<Q(\lambda)<1
\qquad(\lambda>0).
\]
Therefore
\[
c(\lambda)=\frac{\lambda}{Q(\lambda)}>\lambda.
\]

Thus, \(c(\lambda)>5\) for every \(\lambda\geq5\). On the other hand,
\[
Q(1)=e^{-1}\sum_{p\ \mathrm{prime}}\frac{1^p}{p!}>
e^{-1}\left(\frac{1}{2!}+\frac{1}{3!}\right)
=
\frac{2}{3e},
\]
so
\[
c(\lambda_\ast)< c(1)<\frac{3e}{2}<5.
\]
Therefore, no \(\lambda\geq5\) can satisfy \(c(\lambda)\leq c(\lambda_\ast)\). Hence \(\lambda_\ast\) is the unique global minimizer of \(c(\lambda)\).
\end{proof}

\subsection{Proof of local bistability}

\begin{proof}[Proof of Proposition~\ref{prop:mean-field-bistability}]

First, \(\rho=0\) is locally asymptotically stable. Indeed,
\[
Q(z)=\frac{z^2}{2}+O(z^3)
\qquad (z\to 0),
\]
so
\[
f_c'(0)=cQ'(0)=0.
\]
Hence
\[
\lvert f_c'(0)\rvert<1.
\]

Positive fixed points are parameterized by
\[
\rho(\lambda)=Q(\lambda),
\qquad
c(\lambda)=\frac{\lambda}{Q(\lambda)}.
\]
At \(\lambda=\lambda_\ast\),
\[
c'(\lambda_\ast)=0.
\]
Moreover,
\[
c'(\lambda)
=
-\frac{e^\lambda}{\Phi(\lambda)^2}\Omega(\lambda).
\]
Differentiating and using \(\Omega(\lambda_\ast)=0\) gives
\[
c''(\lambda_\ast)
=
-\frac{e^{\lambda_\ast}}{\Phi(\lambda_\ast)^2}
\Omega'(\lambda_\ast).
\]
Since \(\Omega'(\lambda_\ast)<0\),
\[
c''(\lambda_\ast)>0.
\]
Thus \(c(\lambda)\) has a nondegenerate local minimum at \(\lambda_\ast\). Consequently, for every \(c>c_\ast\) sufficiently close to \(c_\ast\), there are two nearby solutions
\[
\lambda_-(c)<\lambda_\ast<\lambda_+(c)
\]
of
\[
c=c(\lambda).
\]
They produce two positive fixed points
\[
\rho_-(c)=Q(\lambda_-(c)),
\qquad
\rho_+(c)=Q(\lambda_+(c)).
\]

To determine their stability, compute the fixed-point multiplier:
\[
\begin{aligned}
f_c'(\rho)
&=cQ'(\lambda) \\
&=\frac{\lambda Q'(\lambda)}{Q(\lambda)}.
\end{aligned}
\]
Since
\[
Q(\lambda)=e^{-\lambda}\Phi(\lambda),
\]
we have
\[
\frac{Q'(\lambda)}{Q(\lambda)}
=
\frac{\Phi'(\lambda)}{\Phi(\lambda)}-1.
\]
Therefore
\[
\begin{aligned}
f_c'(\rho)
&=
\lambda\frac{\Phi'(\lambda)}{\Phi(\lambda)}-\lambda \\
&=
1+\frac{\Omega(\lambda)}{\Phi(\lambda)}.
\end{aligned}
\]

At \(\lambda=\lambda_\ast\),
\[
f_{c_\ast}'(\rho_\ast)=1.
\]

On the lower branch, \(\lambda_-<\lambda_\ast\), so
\[
\Omega(\lambda_-)>0.
\]
Hence
\[
f_c'(\rho_-)
=
1+\frac{\Omega(\lambda_-)}{\Phi(\lambda_-)}
>1,
\]
and \(\rho_-\) is unstable.

On the upper branch, \(\lambda_+>\lambda_\ast\), so
\[
\Omega(\lambda_+)<0,
\]
and therefore
\[
f_c'(\rho_+)<1.
\]
Because the multiplier is continuous and equals \(1\) at \(\lambda_\ast\), for \(\lambda_+\) sufficiently close to \(\lambda_\ast\) we also have
\[
-1<f_c'(\rho_+)<1.
\]
Thus \(\rho_+\) is locally asymptotically stable.

Therefore, for \(c>c_\ast\) sufficiently close to \(c_\ast\), both \(\rho=0\) and \(\rho=\rho_+(c)>0\) are locally stable, while the intermediate positive branch is unstable. Hence the mean-field map is locally bistable.
\end{proof}

\bibliographystyle{plainnat}
\bibliography{refs}

\end{document}